\documentclass[<opyions>]{elsarticle}
\usepackage{amssymb}
\usepackage{graphicx}
\usepackage{epstopdf}
\usepackage{subfigure}
\usepackage{extarrows}
\usepackage{fancyhdr}
\usepackage{amsthm}
\usepackage{extarrows}
\usepackage{caption}
\usepackage{float}
\usepackage{algorithm}
\usepackage{algorithmic}
\usepackage{multirow}
\usepackage{amsmath}
\usepackage[numbers]{natbib}
\usepackage{graphicx}
\usepackage{amsmath} 
\usepackage{amsfonts} 
\usepackage{mathrsfs} 
\usepackage{booktabs} 
\usepackage{color} 

\usepackage{color}
\usepackage{hyperref}

\usepackage[toc,page,title,titletoc,header]{appendix}
\usepackage{appendix}
\theoremstyle{definition}

\theoremstyle{definition}
\usepackage{hyperref}
\usepackage{latexsym, bm}
\usepackage{fancyhdr}
\usepackage{mathrsfs}
\usepackage{wasysym}
\usepackage{float}
\usepackage{titlesec}
\usepackage{pgfplots}
\usepackage{tikz}
\usepackage{subfigure}
\usepackage{natbib}
\biboptions{numbers,sort&compress}

\usetikzlibrary{arrows,shapes,positioning}
\usetikzlibrary{decorations.markings}
\tikzstyle arrowstyle=[scale=1]
\tikzstyle directed=[postaction={decorate,decoration={markings,
    mark=at position .65 with {\arrow[arrowstyle]{stealth}}}}]
\tikzstyle reverse directed=[postaction={decorate,decoration={markings,
    mark=at position .65 with {\arrowreversed[arrowstyle]{stealth};}}}]

\newtheorem{lemma}{Lemma}[section]
\newtheorem{theorem}{Theorem}[section]

\newtheorem{example}{Example}[section]

\allowdisplaybreaks
\begin{document}
	
\begin{frontmatter}
\title{{\bf A Linearized and structure-preserving mixed virtual element method for the extended Fisher-Kolmogorov equation
}}

\author{Zhen Guan\corref{cor1}}
\ead{zhenguan1993@foxmail.com}

\author{Xianxian Cao}
\author{Houchao Zhang}
\author{Junjun Wang}

\cortext[cor1]{Corresponding author.}

\address{School of Mathematics and Statistics, Pingdingshan University, Pingdingshan, 467000, China}

\begin{abstract}
In thsi paper, based on the leap-frog discretization in time and the mixed virtual element discretization in space, we developed a linearized and structure-preserving numerical algorithm. The main contributions of this work lie in that we not only provide a rigorous proof of the energy dissipation property of the fully discrete numerical scheme, but also establish the unconditionally optimal convergence analysis by means of a inverse inequality. The core of the proof lies in the classified discussion of the relationship between \(\tau\) and $h$. Finally, two numerical examples are provided to validate the correctness of the theoretical analysis as well as the energy dissipation property of the proposed scheme.
\end{abstract}
\begin{keyword} 
Leap-frog, Mixed virtual element, Structure-preserving, Unconditionally optimal convergence, Energy dissipation
\end{keyword}

\end{frontmatter}

\thispagestyle{empty}

\numberwithin{equation}{section}
\section{Introduction}\label{section01}
In this work, we develop a structure-preserving numerical scheme for solving the following extended Fisher-Kolmogorov equation (EFK) in polygonal mesh
\begin{align}
	&u_t + \gamma \Delta^2 u - \Delta u + u^3-u = 0, \quad (\boldsymbol{x},t) \in \Omega \times (0,T], \label{202606142344}\\
	&u = \Delta u = 0, \quad (\boldsymbol{x},t) \in \partial\Omega \times (0,T], \label{202606142345}\\
	&u(\boldsymbol{x},0) = u_0(\boldsymbol{x}), \quad \boldsymbol{x} \in \Omega,\label{202606142346}
\end{align}
where $\gamma$ is a positive constant, $u(\boldsymbol{x},t) $ and $u_0(\boldsymbol{x})$ are real-valued functions, $\Omega\subset \mathbb{R}^2$ is a bounded convex polygonal domain, $\partial\Omega$ is the boundary of $\Omega$ and $\Delta$ is the Laplace 
operator. 

The EFK equation is derived by adding a fourth-order term to the standard Fisher-Kolmogorov (FK) equation, which constitutes an important class of nonlinear fourth-order evolution equations.
It has important applications in population genetics \cite{Aronson}, domain wall propagation in liquid crystals \cite{Guo}, and the growth process of primary brain tumors \cite{Coullet}. Due to the high cost of obtaining exact solutions to nonlinear partial differential equations, numerous effective numerical methods have been developed by scholars in recent years. For example, Danumjaya and Pani \cite{Danumjaya} developed a first-order fully discrete numerical algorithm by adopting the \(C^1\)-conforming finite element method and the implicit Euler scheme, and rigorously proved the convergence of the proposed numerical algorithm. Liu and Yin \cite{Liu} derived a parameter-free discontinuous Galerkin algorithm by adopting scalar auxiliary variable time discretization for solving a class of fourth-order gradient flow problems. Boujlida et al. \cite{Boujlida} proposed a three-layer compact difference scheme for solving the one-dimensional EFK equation, and derived the unique solvability and convergence of the scheme via the energy analysis method. Kumar and Natara \cite{Kumar} 
adopted the Euler and Crank-Nicolson numerical schemes to develop a hybrid high-order discretization method for solving the nonlinear EFK and FK equations, and analyzed the corresponding temporal and spatial error estimates. Chauhan and Chaudhary \cite{Chauhan} investigated the space-time isogeometric method for a class of linear fourth-order evolution problems. The core idea is to introduce a auxiliary variable to decompose the fourth-order problem into a system of second-order equations. For the extended Fisher–Kolmogorov equation with clamped boundary conditions, Das and Nataraj \cite{Das} performed spatial discretization via the lowest-order nonstandard finite element method and implemented temporal discretization using the backward Euler scheme. Yang et al. \cite{Yang} constructed a Crank–Nicolson mixed Galerkin scheme for the two-dimensional EFK equation, and conducted theoretical analyses of its convergence and superconvergence errors. Abbaszadeh et al. \cite{Abbaszadeh} adopted the interpolating element-free Galerkin method to solve the nonlinear EFK equation. Shi and Zhang \cite{Shi} investigated the superconvergence property of an energy-dissipative BDF2 scheme combined with anisotropic nonconforming finite element method (FEM) for solving the fourth-order singularly perturbed Bi-flux diffusion model. Wang et al. \cite{Wang} constructed a second-order accurate numerical scheme for the EFK equation, where Fourier spectral collocation was adopted for spatial discretization and stabilized Runge–Kutta–Munthe-Kaas-2e (RKMK2e) served as the temporal approximation method. They further carried out theoretical analysis on the global-in-time energy stability and convergence of the fully discrete scheme, and performed numerical examples to verify the theoretical results. Fu et al. \cite{Fu} investigated the ultraweak local discontinuous Galerkin (UWLDG) method to solve the initial boundary value problem of the EFK equation. Pei et al. \cite{Pei} developed a unified nonconforming virtual element framework incorporating \(C^0\) and full non-conforming virtual elements to discretize the fourth-order EFK equation. We only summarize recent published work on EFK numerical schemes above. Readers interested in this topic can consult the bibliographies of the listed papers for further related computational research.

To the best of our knowledge, existing temporal discretization schemes in the literature can be roughly divided into two categories: fully implicit approaches and linearization-based strategies. Although fully implicit schemes can preserve certain inherent structures of the continuous equation, they require solving a system of nonlinear equations at each time step, which introduces substantial computational overhead. 
This consideration motivates the development of linearized numerical schemes in this work. As reviewed in previous studies, most existing linearized numerical schemes fail to preserve the intrinsic physical structures of the original equation. In addition, the majority of available methods are only applicable to triangular and quadrilateral meshes. Against this background, this work aims to construct a structure-preserving and linearized mixed virtual element algorithm, which is capable of simulating physical phenomena on more complex computational domains. 

To construct the efficient numerical scheme, we first introduce a new variable $v=-\Delta u$ and reformulate the original equation as a second-order system. Subsequently, temporal discretization is implemented via the leap-frog scheme. The core novelty of our work lies in the discrete approximation of cubic nonlinear terms, which differs from the treatments adopted in prior literature, namely
 $$u(t_{n-1})^3-u(t_{n-1}) \approx u(t_{n-1})^2\left(\frac{u(t_{n})+u(t_{n-2})}{2}\right)-\frac{u(t_{n})+u(t_{n-2})}{2},$$
which is critical to maintaining the energy decay property. Furthermore, the discretization scheme \eqref{202606190819} for initial layers also exerts a substantial influence on energy reduction. Another novelty of this work lies in the classification discussion of two cases \(\tau\leq h\) and \(\tau>h\), through which we derive the unconditionally optimal convergence of the fully discrete numerical scheme. As far as the authors are aware, this analytical technique is introduced to the virtual element method for the first time. A key inequality employed throughout the proof is the inverse inequality satisfied by virtual element functions. This result is well established in the virtual element community.

This paper is structured as follows. In Section \ref{section2}, we first introduce basic notations for the virtual element method and then present the detailed derivation of the fully discrete numerical scheme. Section \ref{section3} is devoted to the theoretical analysis of the proposed scheme, including existence and uniqueness, boundedness, energy decay property as well as optimal convergence. Section \ref{section4} presents two numerical examples with known and unknown solutions to verify the convergence accuracy and energy decay property of the proposed scheme, respectively. Section \ref{section5} concludes this paper and outlines possible research directions.

\section{Derivation of the structure-preserving mixed virtual element method}\label{section2}
\subsection{The variational formulation}
We follow conventional notation for Sobolev spaces and their associated norms throughout this work \cite{Ciarlet1978, Ciarlet2013, Zenisek2005}.
For any open subset $D \subset \Omega$, $|\cdot|_{m,p,D}$ stands for the Sobolev seminorm on $W^{m,p}(D)$, while $\|\cdot\|_{m,p,D}$ denotes its full norm.
In the special case $p=2$, we write $H^m(D) = W^{m,2}(D)$, with shortened seminorm $|\cdot|_{m,D}$ and norm $\|\cdot\|_{m,D}$.
The space $H_0^m(D)$ is defined as the completion of $C_0^\infty(\Omega)$ under the norm $\|\cdot\|_{m,D}$.
The symbol $(\cdot,\cdot)_D$ signifies the $L^2(D)$ inner product, and $\|\cdot\|_D$ is the induced $L^2$ norm on this Hilbert space.
Whenever $D$ coincides with the global domain $\Omega$, we drop the subscript $D$ from all preceding seminorms, norms and inner products. It should be noted that the above notation also applies to vector-valued spaces. Moreover, the constant $C$ may take different values at different occurrences. However, constants with subscripts, such as \(C_1,C_2\), stand for a fixed positive constant. Lastly, given a strongly measurable mapping $v: (0,T) \to X$ with $X$ a real Banach space, the Bochner space is formulated by
\[
L^p((0,T);X) = \big\{v \;\big|\; \|v\|_{L^p((0,T);X)} < \infty\big\}.
\]

By introducing a new variable $v = -\Delta u$, the original equation \eqref{202606142344}-\eqref{202606142346} can be restated in an equivalent form as
\begin{align}
	&u_t - \gamma \Delta v - \Delta u + u^3-u = 0, \quad (\boldsymbol{x},t) \in \Omega \times (0,T], \label{202606142347}\\
	& v = -\Delta u,\quad (\boldsymbol{x},t) \in \Omega \times (0,T], \label{202606142350}\\
	&u = v = 0, \quad (\boldsymbol{x},t) \in \partial\Omega \times (0,T], \label{202606142348}\\
	&u(\boldsymbol{x},0) = u_0(\boldsymbol{x}),\quad \boldsymbol{x} \in \Omega.\label{202606142349}
\end{align}

With the notations specified above, we are able to derive the variational formulation satisfied by \eqref{202606142347}-\eqref{202606142349}: Find $u(t),v(t)\in H_0^1(\Omega), t\in(0,T]$ such that 
\begin{align}
	&(u_t,\phi) + \gamma (\nabla v,\nabla \phi) + (\nabla u,\nabla\phi) + (u^3-u,\phi) = 0, \quad \forall\phi\in H_0^1(\Omega), \label{202606151417}\\
	& (v,\psi) = (\nabla u,\nabla \psi),\quad \forall\psi\in H_0^1(\Omega), \label{202606151418}
\end{align}
with the initial condition $u(0)=u_0$. 

To derive the energy dissipation property of the equation, we take the first-order time derivative of both sides of \eqref{202606151418}.
\begin{align}
(v_t,\psi) = (\nabla u_t,\nabla \psi),\quad \forall\psi\in H_0^1(\Omega). \label{202606180933}
\end{align}
Substitute \(\phi = u_t\) into \eqref{202606151417} and \(\psi = v\) into \eqref{202606180933}, respectively, then multiply the second equation by $\gamma$ and sum the two resulting equations to get
\begin{align}
\frac{d}{dt}E(t)=-\|u_{t}\|^2\leq 0,\notag
\end{align}
where the free energy $E(t)$ is defined as 
\begin{align}
E(t)=\frac{\gamma}{2}\|v\|^2+\frac{1}{2}\|\nabla u\|^2+\frac{1}{4}(u^2,u^2)-\frac{1}{2}\|u\|^2,\notag 
\end{align}
which implies that the energy of the system decays over time.

\subsection{Conforming virtual element terminology}
Let \(\mathcal{T}_h\) denote a family of polygonal meshes of the domain \(\Omega\). The mesh size $h$ is defined as
\(h = \max_{K\in\mathcal{T}_h} h_K,\) where \(h_K\) stands for the diameter of polygon $K$. We impose standard regularity assumptions on the mesh sequence \(\mathcal{T}_h\), i.e., there exists a constant \(\rho\in(0,1)\) satisfying

$\bullet$ for each element $K$ and any edge $E\subset\partial K$, it holds that $h_{E}\geq\varrho h_{K}$;

$\bullet$ every element $K$ is star-shaped with respect to a disc $B$ whose radius $\geq \varrho h_{K}$;

$\bullet$  the mesh is quasi-uniform in the sense that $h_{K}\geq \varrho h,~\forall K\in \mathcal{T}_h$.

The local elliptic projection \(\Pi_{K}^{1,k}\), which maps \(H^{1}(K)\) onto the polynomial space \(\mathbb{P}_{k}(K)\), is specified by
\begin{equation*}
	\left\{
	\begin{aligned}
		&(\nabla(\Pi^{1,k}_{K}v),\nabla q)_{K}=(\nabla v,\nabla q)_{K},\quad \forall q\in \mathbb{P}_{k}(K),\\
		&P_{0}^{K}(\Pi^{1,k}_{K}v)=P_{0}^{K}v.\\
	\end{aligned}
	\right.
\end{equation*}
Here \(P_0^K\) is a linear operator producing constant scalars, and its definition reads as follows:
\begin{equation*}
	\left\{
	\begin{aligned}
		&P_{0}^{K}(v)=\frac{1}{N_V} \sum_{i=1}^{N_V} v(V_i),\quad k=1,\\
		&P_{0}^{K}(v)=\frac{1}{|K|}\int_{ K}v\mathrm{d}\boldsymbol{x},\quad k\geq 2,\\
	\end{aligned}
	\right.
\end{equation*}
where \(N_{V}\) denotes the total number of vertices of the cell $K$, and each \(V_i\) stands for a vertex of $K$. 

Likewise, we introduce the local \(L^2\)-orthogonal projection \(\Pi_{K}^{0,k}: L^2(K) \to \mathbb{P}_k(K)\) via the following condition: for any function \(v\in L^2(K)\), the polynomial \(\Pi_{K}^{0,k}v\in \mathbb{P}_k(K)\) fulfills
$$(\Pi_{K}^{0,k} v, q)_{K} = (v, q)_{K},\quad \forall q \in \mathbb{P}_{k}(K).$$
Accordingly, the global \(L^2\)-orthogonal projection operator \(\Pi_{h}^{0,k}: L^2(\Omega) \to \mathbb{P}_k(\mathcal{T}_h)\) can be readily constructed, i.e., for all $v\in L^{2}(\Omega)$ and all $K\in \mathcal{T}_{h} $
$$(\Pi^{0,k}_{h}v)|_{K}=\Pi^{0,k}_{K}(v|_{K}),\quad$$
where $\mathbb{P}_{k}(\mathcal{T}_{h})$ is the broken polynomial space with polynomial degree no greater than $k$.

Given any mesh element $K$, we introduce the enriched conforming local virtual element space 
\begin{align}
W_{K}^{k}=\{v_{K}\in H^{1}(K): \Delta v_{K}\in \mathbb{P}_{k}(K), v_{K}|_{\partial K}\in \mathbb{B}_{k}(\partial K)\},\notag 
\end{align} 
where the boundary element space $\mathbb{B}_{k}(\partial K)$ is defined as
\begin{align}
 \mathbb{B}_{k}(\partial K)=\{v\in C(\partial K):v|_{E}\in \mathbb{P}_{k}(E),~\forall E\subset\partial K\}.\notag 
\end{align}

Now, we define the enhanced local conforming virtual element space defined by 
\begin{align}
	V_{K}^{k}=\{v_{K}\in W_{K}^{k}: (v_{K},q)_{K}=(\Pi^{1,k}_{K}v_{K},q)_{K},~\forall q\in \mathcal{M}_{k}(K)-\mathcal{M}_{k-2}(K)
	\},\notag 
\end{align}
where the symbol \(\mathcal{M}_{k}(K)\) stands for the collection of scaled monomials, which are defined by 
\begin{align}
	\mathcal{M}_{k}(K)=\bigg\{\bigg(\frac{\boldsymbol{x}-\boldsymbol{x}_{K}}{h_{K}}\bigg)^{\boldsymbol{s}}:|\boldsymbol{s}|\leq k\bigg\},\notag 
\end{align}
where \(\boldsymbol{s}=(s_1,s_2)\) denotes a multi-index, \(|\boldsymbol{s}|=s_1+s_2\) and \(\boldsymbol{x}^{\boldsymbol{s}}=x_1^{s_1}x_2^{s_2}\). The symbol \(\boldsymbol{x}_K\) stands for the centroid of element $K$. By an analogous construction, we may define the scaled monomial set \(\mathcal{M}_{k}(E)\) for any edge $E$. 

The degrees of freedom associated with the aforementioned finite-dimensional space \(V_{K}^{k}\) may be selected as

$\bullet$ the values of $v_K\in V_{K}^{k}$ at the vertices,

$\bullet$ for $k\geq2$, the moments $\frac{1}{|E|}\int_{E}v_{K}m\mathrm{d}\boldsymbol{s}$ for each $m\in \mathcal{M}_{k-2}(E)$ and any $E\subset\partial K$,

$\bullet$ for $k\geq2$, the moments $\frac{1}{|K|}\int_{K}v_{K}m\mathrm{d}\boldsymbol{x}$ for each $m\in \mathcal{M}_{k-2}(K)$ in $K$.

For the sake of notational simplicity, we label the aforementioned degrees of freedom as $\phi_{1},\phi_{2},\cdots,\phi_{N_{K}}$, in which \(N_K\) equals the dimension of the finite-dimensional local space $V_{K}^{k}$ defined on every mesh cell $K$.

Using the local degrees of freedom listed above to couple each local space \(V_{K}^{k}\) together, we obtain the global conforming virtual element space \(V_{h}^{k}\) as follows:
\begin{align}
	V_{h}^{k}=\{v_{h}\in H_{0}^{1}(\Omega): v_{h}|_{K}\in V_{K}^{k},~\forall K\in\mathcal{T}_{h}\}.\notag 
\end{align}                                                 

\subsection{Fully discrete numerical scheme}
Let \(0 = t_0 < t_1 < \dots < t_N = T\) form a uniform partition of the temporal interval \([0, T]\), with uniform time step \(\tau=\frac{T}{N}\). For any function sequence \(\{u^n\}_{n=0}^N\) defined on the temporal grid (here \(u^n = u(t_n)\) stands for the value of $u$ at discrete time \(t_n\)), we define the notations for difference quotients as follows:
\begin{align*}
&D_\tau u^1=\frac{u^1-u^0}{\tau},\quad D_\tau u^n = \frac{u^n-u^{n-2}}{2\tau},\quad 2\leq n \leq N,\\
&\hat{u}^1=\frac{u^1+u^0}{2},\quad \hat{u}^n=\frac{u^n+u^{n-2}}{2},\quad 2\leq n \leq N.
\end{align*}

With all the above preliminaries in place, we now turn to introducing the fully discrete finite element algorithm: find \(u_h^n,v_h^n\in V_h^k\ (n\geq2)\) such that
\begin{align}
&m_h(D_\tau u_h^n,\phi_h)+\gamma a_h(\hat{v}_h^n,\phi_h)+a_h(\hat{u}_h^n,\phi_h)\notag\\
&\quad+((\Pi_{h}^{0,k}u_h^{n-1})^2\Pi_{h}^{0,k}\hat{u}_h^n,\Pi_{h}^{0,k}\phi_h)-m_h(\hat{u}_h^n,\phi_h)=0,\quad \forall \phi_h\in V_h^k,\label{202606160945}\\
&m_h(v_h^n,\psi_h)=a_h(u_h^n,\psi_h),\quad  \forall \psi_h\in V_h^k,\label{202606160946}
\end{align}
where the global bilinear forms \(m_h:V_{h}^{k}\times V_{h}^{k}\to \mathbb{R}\) and \(a_h:V_{h}^{k}\times V_{h}^{k}\to \mathbb{R}\) are given by the expressions below:
\begin{align}
 a_{h}(u_{h},v_{h})&=\sum\limits_{K\in\mathcal{T}_{h}} (\nabla\Pi^{1,k}_{K}(u_{h}|_{K}),\nabla\Pi^{1,k}_{K}(v_{h}|_{K}))\notag\\
 &\quad+\sum\limits_{i=1}^{N_K}\phi_{i}(u_{h}|_{K}-\Pi^{1,k}_{K}(u_{h}|_{K}))\phi_{i}(v_{h}|_{K}-\Pi^{1,k}_{K}(v_{h}|_{K})),\quad \forall u_{h},v_{h}\in V_{h}^{k},\notag\\
 m_{h}(u_{h},v_{h})&=\sum\limits_{K\in\mathcal{T}_{h}} (\nabla\Pi^{0,k}_{K}(u_{h}|_{K}),\nabla\Pi^{0,k}_{K}(v_{h}|_{K}))\notag\\
 &\quad+h_K^2\sum\limits_{i=1}^{N_K}\phi_{i}(u_{h}|_{K}-\Pi^{0,k}_{K}(u_{h}|_{K}))\phi_{i}(v_{h}|_{K}-\Pi^{0,k}_{K}(v_{h}|_{K})),\quad \forall u_{h},v_{h}\in V_{h}^{k}.\notag
\end{align}
As indicated in literature \cite{BdV2013}, the discrete bilinear form $a_{h}(\cdot,\cdot)$ possess the boundedness and stability properties given as 
\begin{align*}
&\alpha_{*}|v_{h}|_{1}^2\leq |v_{h}|^2_{a_h}:=a_{h}(v_{h},v_{h})\leq \alpha^{*}|v_{h}|_{1}^2,\quad\forall v_{h}\in V_{h}^{k},\\
&\beta_{*}\|v_{h}\|\leq \|v_{h}\|^2_{m_h}:=m_{h}(v_{h},v_{h})\leq \beta^{*}\|v_{h}\|,\quad\forall v_{h}\in V_{h}^{k}.
\end{align*}

To obtain the numerical solution for the first time level and retain energy dissipation, we use the following Crank-Nicolson discretization, i.e., find $u_h^1,v_h^1\in V_h^k$ such that 
\begin{align}
	&m_h(D_\tau u_h^1,\phi_h)+\gamma a_h(\hat{v}_h^1,\phi_h)+a_h(\hat{u}_h^1,\phi_h)\notag\\
	&\quad+((\Pi_{h}^{0,k}u_h^0)^2\Pi_{h}^{0,k}\hat{u}_h^1,\Pi_{h}^{0,k}\phi_h)-m_h(\hat{u}_h^1,\phi_h)=0,\quad \forall \phi_h\in V_h^k\label{202606161007},\\
	&m_h(v_h^1,\psi_h)=a_h(u_h^1,\psi_h),\quad  \forall \psi_h\in V_h^k,\label{202606161004}
\end{align}
where \(u_h^0\) is defined as \(R_h u(0)\), and \(v_h^0\) solves the subsequent variational equation:
\begin{align}
	m_h(v_h^0,\psi_h)=a_h(u_h^0,\psi_h),\quad\forall \psi_h\in V_h^k.\label{202606190819}
\end{align}
Here, \(R_h\) stands for the elliptic projection operator introduced in \eqref{202606190820}.

Furthermore, to facilitate subsequent theoretical analysis, we define the elliptic projection operator \(R_h: H_0^1(\Omega)\cap H^{k+1}(\Omega)\to V_{h}^k\) satisfying
\begin{align}
a_h(R_h u, v_h) = -(\Delta u, \Pi_{h}^{0,k}v_h), \quad \forall v_h \in V_{h}^k.\label{202606190820}
\end{align}
Based on the analysis given in \cite{BdV2013}, we derive the following error bound for the projection operator:
\begin{align}
\|u-R_hu\| \leq C h^{k+1} \|u\|_{k+1}, \quad \forall\, u\in H_0^1(\Omega) \cap H^{k+1}(\Omega).
\end{align}
\subsection{A collection of essential theoretical results}
\begin{lemma}\label{lemma2.1}
\text{(\cite{Zhao2025})} Suppose $V$ is a normed linear space with norm \(\|\cdot\|\), and \(v^0, v^1, \dots, v^N \in V\). For every integer $n$ such that \(1\le n\le N\), the subsequent inequality is valid:
\begin{align}
\|v^n\|\leq 2\sum\limits_{k=1}^{n}\|\hat{v}^k\|+\|v^0\|,\quad 1\leq n\leq N.\notag
\end{align}
\end{lemma}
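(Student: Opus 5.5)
The plan is a strong induction on $n$ driven by the recursion hidden in the definition of $\hat{v}^k$. By definition $\hat v^1=(v^1+v^0)/2$ and $\hat v^k=(v^k+v^{k-2})/2$ for $k\ge 2$. Solving for the newest term gives
\begin{align*}
v^1=2\hat v^1-v^0,\qquad v^n=2\hat v^n-v^{n-2},\quad 2\le n\le N,
\end{align*}
and hence, by the triangle inequality,
\begin{align*}
\|v^1\|\le 2\|\hat v^1\|+\|v^0\|,\qquad \|v^n\|\le 2\|\hat v^n\|+\|v^{n-2}\|,\quad 2\le n\le N.
\end{align*}
Each $v^n$ is thus controlled by one $\hat v$-term plus a norm at an earlier index. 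The claimed bound is obtained by unrolling this recursion back to $v^0$ (or $v^1$, then $v^0$).

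\textbf{Base cases.} I would check $n=1$ and $n=2$ directly.
For $n=1$ the first inequality above is exactly the claim.
For $n=2$ we get $\|v^2\|\le 2\|\hat v^2\|+\|v^0\|$. Since all terms are nonnegative, this is at most $2(\|\hat v^1\|+\|\hat v^2\|)+\|v^0\|$.

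\textbf{Inductive step.} Let $n\ge 3$ and assume the statement holds for every index $1\le m\le n-1$. Apply it with $m=n-2\ge 1$:
\begin{align*}
\|v^n\|\le 2\|\hat v^n\|+\|v^{n-2}\|\le 2\|\hat v^n\|+2\sum_{k=1}^{n-2}\|\hat v^k\|+\|v^0\|.
\end{align*}
Adding the nonnegative term $2\|\hat v^{n-1}\|$ gives $2\sum_{k=1}^{n}\|\hat v^k\|+\|v^0\|$, which closes the induction.

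\textbf{Where care is needed.} There is no real obstacle here. The only point to watch is that the averaging stencil skips an index (it pairs $n$ with $n-2$). Because of this, the induction must be a strong (two-step) induction with both $n=1$ and $n=2$ as base cases, and the index $n-1$ is absorbed only because every summand is nonnegative. No properties of the norm are used beyond the triangle inequality and homogeneity, so the lemma holds in any normed linear space $V$, as stated.
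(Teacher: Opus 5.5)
Your proof is correct and complete. The paper gives no proof of this lemma and simply quotes it from the cited reference, so there is no in-paper argument to compare against.

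Unrolling $v^1=2\hat v^1-v^0$ and $v^n=2\hat v^n-v^{n-2}$ with the triangle inequality is the natural argument. You also adapted it correctly to the leap-frog average used here: because the stencil pairs $n$ with $n-2$, a strong induction with the two base cases $n=1,2$ is needed. For the Crank--Nicolson average $\hat v^k=(v^k+v^{k-1})/2$, a one-step induction would suffice.

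Your recursion in fact gives a slightly sharper bound, in which only indices $k$ of the same parity as $n$ appear in the sum. The odd chain runs down to $v^1$ and then to $v^0$; the even chain reaches $v^0$ directly. The full sum in the statement is just a convenient over-estimate of this.

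Your argument uses only the triangle inequality and absolute homogeneity. It therefore applies verbatim to the discrete norm $\|\cdot\|_{a_h}$, which is how the lemma is invoked at the end of the proof of Lemma~\ref{lemma3.2} (with $\eta_u^0=0$).
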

\begin{lemma}\label{lemma3}
\text{(\cite{HeywoodRannacher1990})} Assume \(a \geq 0\), \(b > 0\), and let \(\{\eta_i\}_{i=1}^N\), \(\{\xi_i\}_{i=1}^N\) be two sequences consisting of nonnegative real numbers. If the inequality below is satisfied for every integer \(1 \leq n \leq N\):
\begin{align*}
\eta_n + \tau \sum_{i=1}^n \xi_i \leq a + b\tau \sum_{i=1}^n \eta_i.
\end{align*}
Furthermore, provided that \(\tau \leq \dfrac{1}{2b}\), the refined estimate
$$
\eta_n + \tau \sum_{i=1}^n \xi_i \leq a \exp\big(2bn\tau\big)
$$
holds for every integer \(1 \leq n \leq N.\)
\end{lemma}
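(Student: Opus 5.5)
The plan is to control the right-hand side of the hypothesis rather than $\eta_n$ directly, and then to use the hypothesis once more at the end. For $0\le n\le N$ define the auxiliary quantity
\begin{align*}
S_n := a + b\tau\sum_{i=1}^{n}\eta_i ,\qquad S_0 = a .
\end{align*}
Since $\tau\sum_{i=1}^n\xi_i\ge 0$, the assumed inequality gives $\eta_n\le \eta_n+\tau\sum_{i=1}^n\xi_i\le S_n$ for every $1\le n\le N$. It therefore suffices to show that $S_n\le a\exp(2bn\tau)$. Applying the hypothesis once more then yields the claimed bound on $\eta_n+\tau\sum_{i=1}^n\xi_i$.

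The key step is a one-step recursion for $S_n$. By definition, $S_n-S_{n-1}=b\tau\eta_n$. Combining this with $\eta_n\le S_n$ gives $S_n-S_{n-1}\le b\tau S_n$, that is,
\begin{align*}
(1-b\tau)\,S_n\le S_{n-1},\qquad 1\le n\le N .
\end{align*}
The condition $\tau\le \frac{1}{2b}$ ensures $1-b\tau\ge \frac12>0$, so we may divide and obtain $S_n\le (1-b\tau)^{-1}S_{n-1}$. Iterating from $S_0=a$, and using that all quantities are nonnegative, gives
\begin{align*}
S_n\le a\,(1-b\tau)^{-n}.
\end{align*}

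The only remaining point is the elementary inequality $(1-x)^{-1}\le e^{2x}$ for $x\in[0,\frac12]$, which we apply with $x=b\tau$. Equivalently, we need $g(x):=2x+\ln(1-x)\ge 0$ on $[0,\frac12]$. This holds because $g(0)=0$ and $g'(x)=2-\frac{1}{1-x}\ge 0$ for $x\le\frac12$. Hence $(1-b\tau)^{-n}\le \exp(2bn\tau)$, and the chain
\begin{align*}
\eta_n+\tau\sum_{i=1}^n\xi_i\le S_n\le a\exp(2bn\tau)
\end{align*}
completes the proof.

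I do not expect a real obstacle. The one place needing care is that the division by $1-b\tau$ and the exponential comparison both rely on the step-size restriction $\tau\le\frac1{2b}$; this restriction is exactly what produces the factor $2$ in the exponent. The degenerate case $a=0$ is covered by the same argument, since it forces $S_n=0$.
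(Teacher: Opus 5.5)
Your proof is correct and complete: with $S_n=a+b\tau\sum_{i=1}^n\eta_i$ you derive $(1-b\tau)S_n\le S_{n-1}$, iterate to get $S_n\le a(1-b\tau)^{-n}$, and use $(1-b\tau)^{-1}\le e^{2b\tau}$ for $b\tau\le\frac12$, which gives the claim for every $1\le n\le N$, including $a=0$. The paper gives no proof and only cites Heywood--Rannacher; your argument is the standard proof of this implicit discrete Gronwall inequality, specialized to constant coefficient $b$, where $b\tau\le\frac12$ makes $(1-b\tau)^{-1}\le 2$, which is exactly where the factor $2$ in the exponent comes from.
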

\section{Analysis of the fully discrete scheme}\label{section3}
\subsection{Existence of the virtual element solution}
\begin{theorem}
When $\tau\leq1$, the fully discrete numerical algorithm  \eqref{202606160945}-\eqref{202606161004} has a unique solution.
\end{theorem}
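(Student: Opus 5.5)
The scheme is linear at each time level. For \(n\ge 2\), once \(u_h^{n-1},u_h^{n-2},v_h^{n-2}\) are known, \eqref{202606160945}--\eqref{202606160946} is a square linear system for \((u_h^n,v_h^n)\in V_h^k\times V_h^k\). The same holds at the first level for \eqref{202606161007}--\eqref{202606161004}, with data \(u_h^0,v_h^0\); here \(v_h^0\) is uniquely defined by \eqref{202606190819} because \(m_h\) is coercive on \(V_h^k\). So I will argue by induction on \(n\). Since the space is finite dimensional, existence and uniqueness both follow from uniqueness, so it suffices to show that the homogeneous system has only the trivial solution. I treat the case \(n\ge 2\); the case \(n=1\) is the same computation with \(\tau\) replaced by \(\tau/2\) in the coefficient of the mass term, because \(D_\tau u^1=(u^1-u^0)/\tau\) while \(\hat u^1=(u^1+u^0)/2\).

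\textbf{Homogeneous system.} Set the known data to zero and call the unknowns \(w=u_h^n\), \(z=v_h^n\). Since \(D_\tau u^n=(u^n-u^{n-2})/(2\tau)\) and \(\hat u^n=(u^n+u^{n-2})/2\), the homogeneous equations are
\begin{align*}
&\frac{1}{2\tau}m_h(w,\phi_h)+\frac{\gamma}{2}a_h(z,\phi_h)+\frac12 a_h(w,\phi_h)+\frac12\big((\Pi_h^{0,k}u_h^{n-1})^2\Pi_h^{0,k}w,\Pi_h^{0,k}\phi_h\big)-\frac12 m_h(w,\phi_h)=0,\\
&m_h(z,\psi_h)=a_h(w,\psi_h),
\end{align*}
for all \(\phi_h,\psi_h\in V_h^k\).

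\textbf{Energy test.} Take \(\phi_h=w\) in the first equation and \(\psi_h=z\) in the second. The second gives \(a_h(z,w)=m_h(z,z)=\|z\|_{m_h}^2\), so the \(\gamma\)-term becomes \(\frac{\gamma}{2}\|z\|_{m_h}^2\ge 0\). Multiplying the first equation by \(2\) then yields
\begin{align*}
\Big(\frac1\tau-1\Big)\|w\|_{m_h}^2+\gamma\|z\|_{m_h}^2+|w|_{a_h}^2+\big\|(\Pi_h^{0,k}u_h^{n-1})\,\Pi_h^{0,k}w\big\|^2=0.
\end{align*}
Every term is nonnegative when \(\tau\le 1\). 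In particular \(|w|_{a_h}^2=0\), and stability of \(a_h\) gives \(|w|_1=0\). Since \(V_h^k\subset H_0^1(\Omega)\), the Poincaré inequality gives \(w=0\). Then \(m_h(z,z)=a_h(w,z)=0\), and coercivity of \(m_h\) gives \(z=0\). For \(n=1\) the mass coefficient is \(\frac{2}{\tau}-1\), which is even more favourable.

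\textbf{Main subtlety.} The only step requiring care is the sign of the indefinite term \(-m_h(\hat u_h^n,\phi_h)\). The hypothesis \(\tau\le1\) is what lets the \(\frac1\tau\) mass term absorb it. At \(\tau=1\) the mass term vanishes, and one must rely on \(|w|_{a_h}\) together with Poincaré, as done above, rather than on the mass term, to conclude \(w=0\). With that, the induction closes and the scheme has a unique solution at every level.
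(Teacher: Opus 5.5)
Your proposal is correct and follows the paper's proof essentially step for step. Both reduce each level to its homogeneous square linear system, test with $\phi_h=u_h^n$ and $\psi_h=v_h^n$ so that the $\gamma$-term becomes $\gamma\|v_h^n\|_{m_h}^2$, and use $\tau\le 1$ to make every term nonnegative. Your explicit step at $\tau=1$ (conclude $w=0$ from $|w|_{a_h}=0$ via Poincar\'e, then $z=0$ from $m_h(z,z)=a_h(w,z)=0$) simply spells out what the paper compresses into ``boundedness and stability of the discrete bilinear forms.''
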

\begin{proof}
For system of linear equations, the existence and uniqueness of solution is equivalent to the corresponding homogeneous linear system having only the trivial zero solution. Therefore, we only consider the associated homogeneous linear system in the subsequent proof. 

Since the numerical solution at the initial time is already given by elliptic projection, we can obtain the system of linear equations in
$u_h^1$ and $v_h^1$ derived from \eqref{202606161007}-\eqref{202606161004}. Consider 
its homogeneous one:
\begin{align}
	&\frac{1}{\tau}m_h(u_h^1,\phi_h)+\frac{\gamma}{2}a_h(v_h^1,\phi_h)+\frac{1}{2}a_h(u_h^1,\phi_h)\notag\\
	&\quad+\frac{1}{2}((\Pi_{h}^{0,k}u_h^0)^2\Pi_{h}^{0,k}u_h^1,\Pi_{h}^{0,k}\phi_h)-\frac{1}{2}m_h(u_h^1,\phi_h)=0,\quad \forall \phi_h\in V_h^k\label{202606161455},\\
	&\frac{1}{2}m_h(v_h^1,\psi_h)=\frac{1}{2}a_h(u_h^1,\psi_h),\quad  \forall \psi_h\in V_h^k.\label{202606161456}
\end{align}
Take \(\phi_h=u_h^1\) and \(\psi_h=v_h^1\) in \eqref{202606161455} and \eqref{202606161456}, respectively, multiply both sides of \eqref{202606161456} by the constant \(\gamma\), and add the two resulting equations together, we obtain that
\begin{align}
\frac{1}{\tau}\|u_h^1\|_{m_h}^2+\frac{1}{2}\|u_h^1\|_{a_h}^2+\frac{1}{2}\|(\Pi_{h}^{0,k}u_h^1)(\Pi_{h}^{0,k}u_h^0)\|^2-\frac{1}{2}\|u_h^1\|_{m_h}^2+\frac{\gamma}{2}\|v_h^1\|_{m_h}^2=0.\notag
\end{align}
Noting the condition \(\tau\leq 1\), we can immediately arrive at \(\|u_h^1\|_{a_h}=0\) and \(\|v_h^1\|_{m_h}=0\). Combined with the boundedness and stability of the discrete bilinear forms \(a_h(\cdot,\cdot)\) and \(m_h(\cdot,\cdot)\), this yields \(u_h^1=0\) and \(v_h^1=0\), respectively. 

Suppose that $u_h^{n-1},u_h^{n-2},v_h^{n-1},v_h^{n-2}$ have been uniquely determined. Then we have the system of linear equations \eqref{202606160945} and \eqref{202606160946} in $u_h^n,v_h^n$. Consider its homogeneous one:
\begin{align}
	&\frac{1}{2\tau}m_h(u_h^n,\phi_h)+\frac{\gamma}{2}a_h(v_h^n,\phi_h)+\frac{1}{2}a_h(u_h^n,\phi_h)\notag\\
	&\quad+\frac{1}{2}((\Pi_{h}^{0,k}u_h^{n-1})^2\Pi_{h}^{0,k}u_h^n,\Pi_{h}^{0,k}\phi_h)-\frac{1}{2}m_h(u_h^n,\phi_h)=0,\quad \forall \phi_h\in V_h^k,\label{202606161617}\\
	&\frac{1}{2}m_h(v_h^n,\psi_h)=\frac{1}{2}a_h(u_h^n,\psi_h),\quad\forall \psi_h\in V_h^k.\label{202606161618}
\end{align}
Substituting \(\phi_h=u_h^n\) into \eqref{202606161617} and \(\psi_h=\gamma v_h^n\) into \eqref{202606161618}, then summing the two resulting equations, we arrive at
\begin{align}
	\frac{1}{2\tau}\|u_h^n\|_{m_h}^2+\frac{1}{2}\|u_h^n\|_{a_h}^2+\frac{1}{2}\|(\Pi_{h}^{0,k}u_h^n)(\Pi_{h}^{0,k}u_h^{n-1})\|^2-\frac{1}{2}\|u_h^n\|_{m_h}^2+\frac{\gamma}{2}\|v_h^n\|_{m_h}^2=0.\notag
\end{align}
This yields \(u_h^n = 0\) and \(v_h^n = 0\) provided that \(\tau \leq 1\). We complete the proof as desired.
\end{proof}
\subsection{Structure-preserving property}
\begin{theorem}
The fully discrete numerical scheme \eqref{202606160945}–\eqref{202606161004} satisfies the following energy decay property:
$$
E_h^{n}\leq E_h^{n-1},\quad 1\leq n\leq N,
$$	
where the discrete energy \(E_h^n\) with \(0\le n\le N\) is defined by
\begin{align}
E_h^{n} &= \frac{\gamma}{4}(\|v_h^n\|_{m_h}^2+\|v_h^{n-1}\|_{m_h}^2)+\frac{1}{4}(\| u_h^n\|_{a_h}^2+\|u_h^{n-1}\|_{a_h}^2)\notag\\
&\quad+\frac{1}{4}((\Pi_{h}^{0,k}u_h^n)^2,(\Pi_{h}^{0,k}u_h^{n-1})^2)-\frac{1}{4}(\|u_h^n\|_{m_h}^2+\|u_h^{n-1}\|_{m_h}^2),\quad 1\leq n\leq N,\notag\\
E_h^{0}&=\frac{\gamma}{2}\|v_h^0\|_{m_h}^2+\frac{1}{2}\|u_h^0\|_{a_h}^2+\frac{1}{4}\|(\Pi_{h}^{0,k}u_h^0)^2\|^2-\frac{1}{2}\|u_h^0\|_{m_h}^2.
\end{align}
\end{theorem}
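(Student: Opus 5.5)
We prove the two cases $n=1$ and $n\ge 2$ separately, each by the discrete analogue of the continuous energy argument. Test the $u$-equation with the discrete time derivative. Differentiate the constraint $m_h(v_h,\psi_h)=a_h(u_h,\psi_h)$ in time and test it with $\gamma\hat v_h^n$. Then add the two relations.

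\textbf{Case $n\ge 2$.} Take $\phi_h=u_h^n-u_h^{n-2}=2\tau D_\tau u_h^n$ in \eqref{202606160945}. Subtract \eqref{202606160946} at level $n-2$ from the same equation at level $n$, which is legitimate since \eqref{202606161004} and \eqref{202606190819} give the constraint at every level $0,\dots,n$. This yields
\begin{align*}
m_h(v_h^n-v_h^{n-2},\psi_h)=a_h(u_h^n-u_h^{n-2},\psi_h),
\end{align*}
and we test it with $\psi_h=\gamma\hat v_h^n$. Using the symmetry of $m_h$ and $a_h$, the term $\gamma a_h(\hat v_h^n,u_h^n-u_h^{n-2})$ from the first equation equals $\gamma m_h(v_h^n-v_h^{n-2},\hat v_h^n)=\frac{\gamma}{2}(\|v_h^n\|_{m_h}^2-\|v_h^{n-2}\|_{m_h}^2)$. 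Likewise,
\begin{align*}
a_h(\hat u_h^n,u_h^n-u_h^{n-2})&=\tfrac12(\|u_h^n\|_{a_h}^2-\|u_h^{n-2}\|_{a_h}^2),\\
m_h(\hat u_h^n,u_h^n-u_h^{n-2})&=\tfrac12(\|u_h^n\|_{m_h}^2-\|u_h^{n-2}\|_{m_h}^2).
\end{align*}
The nonlinear term is the key point. Write $w=\Pi_h^{0,k}u_h^{n-1}$, $a=\Pi_h^{0,k}u_h^n$, $b=\Pi_h^{0,k}u_h^{n-2}$; by linearity of $\Pi_h^{0,k}$ the term equals
\begin{align*}
\Big(w^2\,\tfrac{a+b}{2},\,a-b\Big)=\tfrac12\big(w^2,a^2-b^2\big)=\tfrac12\big((a^2,w^2)-(w^2,b^2)\big).
\end{align*}
This is exactly twice the difference of the quartic parts of $E_h^n$ and $E_h^{n-1}$. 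Grouping the remaining quadratic terms as $(\|\cdot\|^2_n+\|\cdot\|^2_{n-1})-(\|\cdot\|^2_{n-1}+\|\cdot\|^2_{n-2})$, the whole identity becomes
\begin{align*}
2(E_h^n-E_h^{n-1})=-2\tau\|D_\tau u_h^n\|_{m_h}^2\le 0 .
\end{align*}
So the leap-frog linearization $u^2\hat u$ is precisely what telescopes the quartic term.

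\textbf{Case $n=1$.} Take $\phi_h=u_h^1-u_h^0$ in \eqref{202606161007}. Subtract \eqref{202606190819} from \eqref{202606161004} and test with $\gamma\hat v_h^1$. The quadratic terms again give $\frac12(\|\cdot\|_1^2-\|\cdot\|_0^2)$ for each form. The nonlinear term gives
\begin{align*}
\tfrac12\big((\Pi_h^{0,k}u_h^0)^2,(\Pi_h^{0,k}u_h^1)^2-(\Pi_h^{0,k}u_h^0)^2\big).
\end{align*}
Half of its first part is $\frac14((\Pi u_h^1)^2,(\Pi u_h^0)^2)$, the quartic part of $E_h^1$. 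Half of its second part is $-\frac14\|(\Pi u_h^0)^2\|^2$, matching the quartic part of $E_h^0$. For the quadratic terms, $\frac12(\|u_h^1\|^2-\|u_h^0\|^2)=\frac14(\|u_h^1\|^2+\|u_h^0\|^2)-\frac12\|u_h^0\|^2$, which matches the definitions of $E_h^1$ and $E_h^0$ exactly. Hence
\begin{align*}
E_h^1-E_h^0=-\tfrac1\tau\|u_h^1-u_h^0\|_{m_h}^2\le0 .
\end{align*}

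The only delicate step is the bookkeeping that matches the asymmetric definition of $E_h^0$ with the Crank–Nicolson start; this is why \eqref{202606190819} defines $v_h^0$ through the discrete constraint rather than interpolating $-\Delta u_0$. Otherwise the argument is a direct algebraic identity requiring no inequalities.
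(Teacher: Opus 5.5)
Your proposal is correct and follows the paper's proof. Testing with $u_h^n-u_h^{n-2}$ (resp. $u_h^1-u_h^0$) is just $2\tau$ (resp. $\tau$) times the paper's choice $\phi_h=D_\tau u_h^n$, and you combine it, as the paper does, with $\psi_h=\gamma\hat v_h^n$ in the time-differenced constraint and the same telescoping of the frozen-coefficient quartic term. There is one harmless factor-of-two slip in your $n=1$ bookkeeping: after halving, the $m_h$ terms read $\frac14(\|u_h^1\|_{m_h}^2-\|u_h^0\|_{m_h}^2)=\frac14(\|u_h^1\|_{m_h}^2+\|u_h^0\|_{m_h}^2)-\frac12\|u_h^0\|_{m_h}^2$, and similarly for the $a_h$ and $v$ terms, so the exact identity is $E_h^1-E_h^0=-\frac{1}{2\tau}\|u_h^1-u_h^0\|_{m_h}^2$, which still gives the decay.
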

\begin{proof}
Noticing \eqref{202606161004} and \eqref{202606190819}, it holds that 
\begin{align}
	m_h(D_\tau v_h^1,\psi_h)=a_h(D_\tau u_h^1,\psi_h),\quad\forall \psi_h\in V_h^k,\label{202606190828}
\end{align}
Substitute \(\phi_h=D_\tau u_h^1\) into \eqref{202606161007} and \(\psi_h=\gamma \hat{v}_h^1\) into \eqref{202606190828}, then sum the two resulting equations, we arrive at
\begin{align}
\gamma m_h(D_\tau v_h^1,\hat{v}_h^1)+ a_{h}(\hat{u}_h^1,D_\tau u_h^1)+((\Pi_{h}^{0,k}u_h^0)^2\Pi_{h}^{0,k}\hat{u}_h^1,\Pi_{h}^{0,k}D_\tau u_h^1)-m_h(\hat{u}_h^1,D_\tau u_h^1)\leq 0,\label{202606162258}
\end{align}
where we have used the fact that $\|D_\tau u_h^1\|_{m_h}^2\geq0$.

Noting the following elementary results:
\begin{align}
&\gamma m_h(D_\tau v_h^1,\hat{v}_h^1)= \frac{\gamma}{2\tau}(\|v_h^1\|_{m_h}^2-\|v_h^0\|_{m_h}^2),\label{202606162255}\\
&a_{h}(\hat{u}_h^1,D_\tau u_h^1)=\frac{1}{2\tau}(\|u_h^1\|_{a_h}^2-\|u_h^0\|_{a_h}^2),\label{202606162256}\\
&m_h(\hat{u}_h^1,D_\tau u_h^1)=\frac{1}{2\tau}(\|u_h^1\|_{m_h}^2-\|u_h^0\|_{m_h}^2),\label{202606191244}
\end{align}
and 
\begin{align}
((\Pi_{h}^{0,k}u_h^0)^2\Pi_{h}^{0,k}\hat{u}_h^1,\Pi_{h}^{0,k}D_\tau u_h^1)&=\frac{1}{2\tau}((\Pi_{h}^{0,k}u_h^0)^2,(\Pi_{h}^{0,k}u_h^1)^2-(\Pi_{h}^{0,k}u_h^0)^2)\notag\\
&=\frac{1}{2\tau}((\Pi_{h}^{0,k}u_h^0)^2,(\Pi_{h}^{0,k}u_h^1)^2)-\frac{1}{2\tau}\|(\Pi_{h}^{0,k}u_h^0)^2\|^2.\label{202606162257}
\end{align}
Substituting \eqref{202606162255}–\eqref{202606162257} into  \eqref{202606162258} and multiplying both sides by $\frac{\tau}{2}$, we arrive at
\begin{align}
&\quad\frac{\gamma}{4}(\|v_h^1\|_{m_h}^2+\|v_h^{0}\|_{m_h}^2)+\frac{1}{4}(\| u_h^1\|_{a_h}^2+\|u_h^{0}\|_{a_h}^2)\\
&\quad\quad+\frac{1}{4}((\Pi_{h}^{0,k}u_h^1)^2,(\Pi_{h}^{0,k}u_h^{0})^2)-\frac{1}{4}(\|u_h^1\|_{m_h}^2+\|u_h^0\|_{m_h}^2)\\
&\leq \frac{\gamma}{2}\|v_h^0\|_{m_h}^2+\frac{1}{2}\|u_h^0\|_{a_h}^2+\frac{1}{4}\|(\Pi_{h}^{0,k}u_h^0)^2\|^2-\frac{1}{2}\|u_h^0\|_{m_h}^2,
\end{align}
that is to say, $E_h^1\leq E_h^0$. 

Similarly, from \eqref{202606160946}, \eqref{202606161004} and \eqref{202606190819}, it follows that 
\begin{align}
	m_h(D_\tau v_h^n,\psi_h)=a_h(D_\tau u_h^n,\psi_h),\quad\forall \psi_h\in V_h^k,\quad 2\leq n\leq N.\label{202606190941}
\end{align}
After inserting \(\phi_h=D_\tau u_h^n\) into \eqref{202606160945} and \(\psi_h=\gamma \hat{v}_h^n\) into \eqref{202606190941}, we add together the two derived equalities to arrive at
\begin{align}
\gamma m_h(D_\tau v_h^n,\hat{v}_h^n)+ a_{h}(\hat{u}_h^n,D_\tau u_h^n)+((\Pi_{h}^{0,k}u_h^{n-1})^2\Pi_{h}^{0,k}\hat{u}_h^n,\Pi_{h}^{0,k}D_\tau u_h^n)-m_h(\hat{u}_h^n,D_\tau u_h^n)\leq 0.\label{202606162323}
\end{align}
By an argument analogous to that for \eqref{202606162255}–\eqref{202606162257}, we can readily deduce
\begin{align}
&\gamma m_h(D_\tau v_h^n,\hat{v}_h^n)= \frac{\gamma}{4\tau}(\|v_h^n\|_{m_h}^2-\|v_h^{n-2}\|_{m_h}^2),\notag\\
&a_{h}(\hat{u}_h^n,D_\tau u_h^n)=\frac{1}{4\tau}(\|u_h^n\|_{a_h}^2-\|u_h^{n-2}\|_{a_h}^2),\notag\\
&((\Pi_{h}^{0,k}u_h^{n-1})^2\Pi_{h}^{0,k}\hat{u}_h^n,\Pi_{h}^{0,k}D_\tau u_h^n)=\frac{1}{4\tau}((\Pi_{h}^{0,k}u_h^{n-1})^2,(\Pi_{h}^{0,k}u_h^n)^2)\notag\\
&\quad-\frac{1}{4\tau}((\Pi_{h}^{0,k}u_h^{n-1})^2,(\Pi_{h}^{0,k}u_h^{n-2})^2),\notag\\
&m_h(\hat{u}_h^n,D_\tau u_h^n)=\frac{1}{4\tau}(\|u_h^n\|_{m_h}^2-\|u_h^{n-2}\|_{m_h}^2).\notag
\end{align}
Combining the above results, we can immediately get that $$E_h^n\leq E_h^{n-1},\quad 2\leq n\leq N.$$
All this completes the proof.
\end{proof}
\subsection{Boundedness of the virtual element solution}
\begin{theorem}\label{theorem3.3}
Let \(u_h^n\) and \(v_h^n\) be the solutions to the numerical scheme \eqref{202606160945}–\eqref{202606161004}. If \(\tau\leq \frac{1}{3}\), then the following estimate holds:
\begin{align}
\|u_h^n\|\leq C_1\|u_h^{0}\|,\quad 0\leq n\leq N.
\end{align}
\end{theorem}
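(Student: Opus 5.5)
Test \eqref{202606160945} with the averaged quantity $\phi_h=\hat u_h^n$ rather than $D_\tau u_h^n$. The target is an $L^2$-type recursion that can be closed with the discrete Gronwall inequality of Lemma~\ref{lemma3}, after which Lemma~\ref{lemma2.1} converts bounds on $\hat u_h^n$ into bounds on $u_h^n$. For $n\ge2$ the choice $\phi_h=\hat u_h^n$ gives
\begin{align*}
\frac{1}{4\tau}\big(\|u_h^n\|_{m_h}^2-\|u_h^{n-2}\|_{m_h}^2\big)+\gamma a_h(\hat v_h^n,\hat u_h^n)+\|\hat u_h^n\|_{a_h}^2+\|(\Pi_h^{0,k}u_h^{n-1})\Pi_h^{0,k}\hat u_h^n\|^2=\|\hat u_h^n\|_{m_h}^2 .
\end{align*}
The cubic term is a perfect square, so it is nonnegative and is simply dropped. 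For the coupling term, take $\psi_h=\hat v_h^n$ in the averaged version of \eqref{202606160946}, namely $m_h(\hat v_h^n,\psi_h)=a_h(\hat u_h^n,\psi_h)$. This gives $a_h(\hat v_h^n,\hat u_h^n)=\|\hat v_h^n\|_{m_h}^2\ge0$, so it too can be discarded.

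For $n=1$ the same steps with $\phi_h=\hat u_h^1$ in \eqref{202606161007} give the analogue with $\frac{1}{2\tau}(\|u_h^1\|_{m_h}^2-\|u_h^0\|_{m_h}^2)$. The coupling term is again nonnegative, because \eqref{202606161004} and \eqref{202606190819} averaged give $m_h(\hat v_h^1,\psi_h)=a_h(\hat u_h^1,\psi_h)$.

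Next multiply by $4\tau$ (by $2\tau$ for $n=1$) and sum over $n$. Splitting the sum into even and odd indices telescopes the two-step differences, so $\|u_h^n\|_{m_h}^2+\|u_h^{n-1}\|_{m_h}^2$ is bounded by $C\|u_h^0\|_{m_h}^2$ plus $C\tau\sum_{k\le n}\|\hat u_h^k\|_{m_h}^2$. Then use $\|\hat u_h^k\|_{m_h}^2\le\frac12(\|u_h^k\|_{m_h}^2+\|u_h^{k-2}\|_{m_h}^2)$. Setting $\eta_n=\|u_h^n\|_{m_h}^2+\|u_h^{n-1}\|_{m_h}^2$ yields
\begin{align*}
\eta_n\le C\|u_h^0\|_{m_h}^2+b\tau\sum_{k=1}^n\eta_k ,
\end{align*}
with an explicit constant $b$. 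Alternatively, one may move the $k=n$ term to the left-hand side, which is where a smallness condition such as $\tau\le\frac13$ enters and keeps the leading coefficient positive. Lemma~\ref{lemma3} then gives $\eta_n\le C\|u_h^0\|_{m_h}^2\exp(2bT)$. Finally, the equivalence of $\|\cdot\|_{m_h}$ and $\|\cdot\|$ on $V_h^k$ converts this into $\|u_h^n\|\le C_1\|u_h^0\|$.

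The main obstacle is the bookkeeping of constants. I need $b$ to satisfy $\tau\le 1/(2b)$ given only $\tau\le\frac13$, and this requires tracking the factor $4$ from the leap-frog step and the $\frac12$ from averaging carefully. If the constants do not match directly, I would absorb the $k=n$ term on the left first, using $1-c\tau\ge\frac13$ under $\tau\le\frac13$, and then apply a standard discrete Gronwall argument to the remaining sum. The $n=1$ Crank--Nicolson step has different weights from the leap-frog steps and must be treated separately as the starting point of the odd-index chain. Lemma~\ref{lemma2.1} is a backup route: bound $\sum\|\hat u_h^k\|$ from the summed inequality and deduce the bound on $\|u_h^n\|$ from it if the direct telescoping becomes awkward.
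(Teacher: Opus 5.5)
Your proposal is correct, and its first half is exactly the paper's argument. The paper also tests with $\phi_h=\hat u_h^n$ and $\psi_h=\gamma\hat v_h^n$ in the averaged $v$-relation, discards the nonnegative coupling and cubic terms, and obtains $\frac{1}{4\tau}(\|u_h^n\|_{m_h}^2-\|u_h^{n-2}\|_{m_h}^2)\le\|\hat u_h^n\|_{m_h}^2$, with the $\frac{1}{2\tau}$ analogue for $n=1$.

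You diverge in the closing step. The paper never sums. It uses $\|\hat u_h^n\|_{m_h}\le\frac12(\|u_h^n\|_{m_h}+\|u_h^{n-2}\|_{m_h})$ and divides by this quantity to get $\|u_h^n\|_{m_h}-\|u_h^{n-2}\|_{m_h}\le\tau(\|u_h^n\|_{m_h}+\|u_h^{n-2}\|_{m_h})$. This is the one-step bound $\|u_h^n\|_{m_h}\le\frac{1+\tau}{1-\tau}\|u_h^{n-2}\|_{m_h}\le e^{3\tau}\|u_h^{n-2}\|_{m_h}$ for $\tau\le\frac13$. The paper then iterates along the even and odd chains, the odd one starting from $\|u_h^1\|_{m_h}\le e^{3\tau/2}\|u_h^0\|_{m_h}$. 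That is where the threshold $\frac13$ comes from, and no Gronwall lemma is needed.

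Your summation route also works, but Lemma~\ref{lemma3} applied directly cannot reach $\tau\le\frac13$. The coefficient of $\|u_h^n\|_{m_h}^2$ on the right is $2\tau$, so $b\ge 2$ and the lemma needs $\tau\le\frac14$. Your absorption fallback is the version that delivers the stated condition. The terms $\|u_h^n\|_{m_h}^2$ and $\|u_h^{n-1}\|_{m_h}^2$ appear on the right with coefficients at most $2\tau$, which leaves $(1-2\tau)\eta_n\ge\frac13\eta_n$ on the left. An explicit discrete Gronwall inequality for the remaining sum, which involves only $\eta_k$ with $k\le n-1$, has no step-size restriction.

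Your squared inequality already gives $(1-2\tau)\|u_h^n\|_{m_h}^2\le(1+2\tau)\|u_h^{n-2}\|_{m_h}^2$ directly. That is the paper's recursion in squared form, so the summation is unnecessary here. The summed-plus-Gronwall template only pays off when source or consistency terms are present, as in Lemma~\ref{lemma3.2}.

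Two small corrections. First, drop the Lemma~\ref{lemma2.1} backup. It gives $\|u_h^n\|\le 2\sum_{k=1}^n\|\hat u_h^k\|+\|u_h^0\|$, which loses a factor $n\sim T/\tau$ and cannot produce a $\tau$-independent $C_1$. Second, the averaged relation $m_h(\hat v_h^n,\psi_h)=a_h(\hat u_h^n,\psi_h)$ for $n=2,3$ uses \eqref{202606190819} and \eqref{202606161004}, not only \eqref{202606160946}.
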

\begin{proof}
Combining \eqref {202606161004} with \eqref {202606190819}, we deduce that
\begin{align}
	m_h(\hat{v}_h^1,\psi_h)=a_h(\hat{u}_h^1,\psi_h),\quad\forall \psi_h\in V_h^k.\label{202606191347}
\end{align}
Substituting \(\phi_h=\hat{u}_h^1\) into \eqref{202606161007} and \(\psi_h=\gamma \hat{v}_h^1\) into \eqref{202606191347}, adding the two resulting equations yields
\begin{align}
	m_h(D_\tau u_h^1,\hat{u}_h^1)+\|\hat{u}_h^1\|^2_{a_h}+\gamma \|\hat{v}_h\|_{m_h}^2+ ((\Pi_{h}^{0,k}u_h^0)^2\Pi_{h}^{0,k}\hat{u}_h^1,\Pi_{h}^{0,k} \hat{u}_h^1)-\|\hat{u}_h^1\|_{m_h}^2= 0.\label{202606191349}
\end{align}

In view of 
\begin{align}
m_h(D_\tau u_h^1,\hat{u}_h^1)=\frac{1}{2\tau}(\|u_h^1\|_{m_h}^2-\|u_h^0\|_{m_h}^2),
\end{align}
and the nonnegativity of the remaining terms, we have 
\begin{align}
\frac{1}{2\tau}(\|u_h^1\|^2_{m_h}-\|u_h^0\|^2_{m_h})\leq \|\hat{u}_h^1\|^2_{m_h}\leq\left(\frac{\|u_h^1\|_{m_h}+\|u_h^0\|_{m_h}}{2}\right)^2,
\end{align}
where we have also used the triangle inequality.

 Dividing both sides of the above inequality by \(\frac{\|u_h^1\|_{m_h}+\|u_h^0\|_{m_h}}{2}\), we obtain
 \begin{align}
 \frac{1}{\tau}(\|u_h^1\|_{m_h}-\|u_h^0\|_{m_h})\leq \frac{\|u_h^1\|_{m_h}+\|u_h^0\|_{m_h}}{2}.\label{202606191442}
 \end{align}
 When $\tau\leq \frac{2}{3}$, it follows from \eqref{202606191442} that
 \begin{align}
 \|u_h^1\|_{m_h}\leq \frac{1+\frac{\tau}{2}}{1-\frac{\tau}{2}}\|u_h^0\|_{m_h}\leq \left(1+\frac{3\tau}{2}\right)\|u_h^0\|_{m_h}\leq \exp\left(\frac{3\tau}{2}\right)\|u_h^{0}\|_{m_h}\leq\exp\left(3T\right)\|u_h^{0}\|_{m_h},
 \end{align}
 where we have employed the elementary inequality 
 \begin{align}
 &\frac{1+x}{1-x}\leq 1+3x, \quad 0<x\leq\frac{1}{3},\\
 & 1+x\leq \exp(x), \quad x> 0.  
 \end{align}
 
Similarly, in view of \eqref{202606160946}, \eqref{202606161004} and \eqref{202606190819}, it follows that 
 \begin{align}
 	m_h(\hat{v}_h^n,\psi_h)=a_h(\hat{u}_h^n,\psi_h),\quad 2\leq n\leq N,\quad\forall \psi_h\in V_h^k.\label{202606191452}
 \end{align}
 Substituting \(\phi_h=\hat{u}_h^n\) into \eqref{202606160945} and \(\psi_h=\gamma \hat{v}_h^n\) into \eqref{202606191452}, respectively, then summing up these two equations to obtain
 \begin{align}
 m_h(D_\tau u_h^n,\hat{u}_h^n)+\gamma \|\hat{v}_h^n\|^2_{m_h}+ \|\hat{u}_h^n\|_{a_{h}}^2+((\Pi_{h}^{0,k}u_h^{n-1})^2\Pi_{h}^{0,k}\hat{u}_h^n,\Pi_{h}^{0,k} \hat{u}_h^n)-\|\hat{u}_h^n\|_{m_h}^2=0.\label{202606191456}
 \end{align}
 Therefore, it holds that 
 \begin{align}
 \frac{1}{4\tau}(\|u_h^n\|^2_{m_h}-\|u_h^{n-2}\|^2_{m_h})\leq \|\hat{u}_h^n\|^2_{m_h}\leq\left(\frac{\|u_h^n\|_{m_h}+\|u_h^{n-2}\|_{m_h}}{2}\right)^2,
 \end{align}
 where we have used the facts that 
\begin{align}
&m_h(D_\tau u_h^n,\hat{u}_h^n)=\frac{1}{4\tau}(\|u_h^n\|_{m_h}^2-\|u_h^{n-2}\|_{m_h}^2),\\
&\gamma \|\hat{v}_h^n\|^2_{m_h}+ \|\hat{u}_h^n\|_{a_{h}}^2+((\Pi_{h}^{0,k}u_h^{n-1})^2\Pi_{h}^{0,k}\hat{u}_h^n,\Pi_{h}^{0,k} \hat{u}_h^n)\geq 0.
\end{align}
Upon dividing both sides of the above inequality by \(\frac{\|u_h^n\|_{m_h}+\|u_h^{n-2}\|_{m_h}}{2}\), we can deduce the following estimate
\begin{align}
	\frac{1}{2\tau}(\|u_h^n\|_{m_h}-\|u_h^{n-2}\|_{m_h})\leq \frac{\|u_h^n\|_{m_h}+\|u_h^{n-2}\|_{m_h}}{2}.\label{202606191517}
\end{align}
In the case \(\tau \leq \frac{1}{3}\), it follows from \eqref{202606191517} that
\begin{align}
	\|u_h^n\|_{m_h}\leq \frac{1+\tau}{1-\tau}\|u_h^{n-2}\|_{m_h}\leq \left(1+3\tau\right)\|u_h^{n-2}\|_{m_h}\leq \exp\left(3\tau\right)\|u_h^{n-2}\|_{m_h},\quad 2\leq n\leq N,
\end{align}
which by recursion deduces to
\begin{align}
\|u_h^n\|_{m_h}\leq \exp\left(3n\tau\right)\|u_h^{0}\|_{m_h}\leq \exp\left(3T\right)\|u_h^{0}\|_{m_h},\quad 2\leq n\leq N.
\end{align}
Utilizing the norm equivalence associated with the discrete bilinear form \(m_h\), there exists a positive constant \(C_1\) such that
\begin{align}
	\|u_h^n\|\leq C_1\|u_h^{0}\|,\quad 0\leq n\leq N.
\end{align}
The proof is completed. 
\end{proof}
\subsection{Convergence of the virtual element scheme}
\begin{lemma}\label{lemma3.1}
Suppose $u^1,v^1$ are the solutions of the problem \eqref{202606142344}–\eqref{202606142346} and $u_h^1,v_h^1$ are the solutions of the fully discrete scheme \eqref{202606161007}–\eqref{202606161004}. Denote
\begin{align}
\eta_u^n=R_hu^n-u_h^n,\quad \eta_v^n=R_hv^n-v_h^n, \quad 0\leq n\leq N.
\end{align}
Then when $\tau\leq\frac{1}{4}$, there exists a constant $C$ such that
\begin{align}
\|\eta_u^1\|+\tau|\eta_u^1|_1\leq C(\tau^2+h^{k+1}). \label{202606210920}
\end{align}
\end{lemma}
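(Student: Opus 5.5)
We write the error equations for the first step and use a single energy argument for a Crank--Nicolson step with a lagged, linearized nonlinearity. At $t_{1/2}$ the continuous solution satisfies \eqref{202606151417}--\eqref{202606151418}. We replace $u_t$ by $D_\tau u^1$, $u^2u$ by $(u^0)^2\hat u^1$, and $v$, $u$ by $\hat v^1$, $\hat u^1$, and insert $R_h$ through \eqref{202606190820}. Subtracting \eqref{202606161007}--\eqref{202606161004} then gives, for all $\phi_h,\psi_h\in V_h^k$,
\begin{align*}
&m_h(D_\tau\eta_u^1,\phi_h)+\gamma a_h(\hat\eta_v^1,\phi_h)+a_h(\hat\eta_u^1,\phi_h)-m_h(\hat\eta_u^1,\phi_h)\\
&\quad+\big((\Pi_h^{0,k}u_h^0)^2\Pi_h^{0,k}\hat u_h^1-(u^0)^2\hat u^1,\Pi_h^{0,k}\phi_h\big)=(R^1,\phi_h),\\
&m_h(\eta_v^1,\psi_h)=a_h(\eta_u^1,\psi_h)+(S^1,\psi_h).
\end{align*}
The residual $R^1$ collects several pieces: the Crank--Nicolson truncation errors (of order $\tau^2$ under smoothness of $u$); the quantity $(u^0)^2\hat u^1-(u^{1/2})^3$, which is $O(\tau)$ only but vanishes at $\tau\to0$ linearly; the consistency errors of $m_h$ and the projections acting on $R_hu$ (of order $h^{k+1}$); and $D_\tau(R_hu^1-u^1)$ (of order $h^{k+1}$).

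We flag the lagged coefficient as a point to watch. The term $(u^0)^2\hat u^1-u(t_{1/2})^3$ is only $O(\tau)$ in $L^2$. However, it is multiplied by $\tau$ after testing, so it contributes $\tau\cdot\tau$ to $\|\eta_u^1\|$. This is consistent with the $\tau^2$ target, though it is the sharpest point.

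The second ingredient is the choice of test functions. We take $\phi_h=\hat\eta_u^1$ and use $\psi_h=\gamma\hat\eta_v^1$ in the averaged second equation, noting that $\eta_u^0=0$, so $\eta_v^0$ is an $O(h^{k+1})$ quantity coming from \eqref{202606190819}. Adding the two gives
\[
\frac{1}{2\tau}\big(\|\eta_u^1\|_{m_h}^2-\|\eta_u^0\|_{m_h}^2\big)+\|\hat\eta_u^1\|_{a_h}^2+\gamma\|\hat\eta_v^1\|_{m_h}^2\le \|\hat\eta_u^1\|_{m_h}^2+|\text{nonlinear}|+|(R^1,\hat\eta_u^1)|+|(S^1,\gamma\hat\eta_v^1)|.
\]
We split the nonlinear difference as
\[
(\Pi_h^{0,k}u_h^0)^2\,\Pi_h^{0,k}\hat\eta_u^1\;+\;\big[(\Pi_h^{0,k}u_h^0)^2-(u^0)^2\big]\hat u^1\;+\;(\Pi_h^{0,k}u_h^0)^2\,\Pi_h^{0,k}(\hat u_h^1-R_h\hat u^1)\text{-type terms}.
\]
The first piece, tested with $\hat\eta_u^1$, is nonnegative and is dropped. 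The remaining pieces need $\|\Pi_h^{0,k}u_h^0\|_{L^\infty}\le C$. Since $u_h^0=R_hu(0)$, this follows from approximation properties of $R_h$ together with an inverse inequality, with no condition linking $\tau$ and $h$ at this step. Their contribution is then $C(h^{k+1}+\tau^2)\|\hat\eta_u^1\|$.

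With these bounds, and using $\eta_u^0=0$ and $\tau\le\frac14$ to absorb the term $\|\hat\eta_u^1\|_{m_h}^2\le\frac12\|\eta_u^1\|_{m_h}^2$ (the factor $2\tau\cdot\frac12$ is at most $\frac14$), we obtain
\[
\|\eta_u^1\|^2+\tau|\hat\eta_u^1|_1^2\le C\tau(\tau^2+h^{k+1})\|\eta_u^1\|+C\tau^2(\ldots).
\]
This yields $\|\eta_u^1\|\le C(\tau^3+\tau h^{k+1})\le C(\tau^2+h^{k+1})$ and $\tau|\eta_u^1|_1^2\le C(\tau^2+h^{k+1})^2$, so $|\eta_u^1|_1\le C(\tau^2+h^{k+1})\tau^{-1/2}$. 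Multiplying by $\tau$ gives the $\tau|\eta_u^1|_1$ bound. The $H^1$ control uses $\hat\eta_u^1=\eta_u^1/2$, which holds because $\eta_u^0=0$.

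The main obstacle is the $S^1$ and $\eta_v^0$ consistency terms of the mixed equation. Here $v_h^0$ is not $R_hv^0$, so we must show $\|\eta_v^0\|\le Ch^{k+1}$ using the defining relation \eqref{202606190820} with $v=-\Delta u$ and the $m_h$ consistency. I would first try to verify that $m_h(R_hv,\psi_h)-a_h(R_hu,\psi_h)=O(h^{k+1})\|\psi_h\|$, because $a_h(R_hu,\psi_h)=(v,\Pi_h^{0,k}\psi_h)$ and $m_h(R_hv,\psi_h)\approx(v,\Pi_h^{0,k}\psi_h)$ up to polynomial consistency.
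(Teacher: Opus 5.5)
Your plan is essentially the paper's proof. It uses the same first-step error equation and the same test functions $\phi_h=\hat\eta_u^1$ and $\psi_h=\gamma\hat\eta_v^1$. It rests on the same key observation: the lagged coefficient makes the truncation error only $O(\tau)$, which is harmless because it is multiplied by $\tau$ after testing. It also shares the bound $\|\Pi_h^{0,k}u_h^0\|_{0,\infty}\le C$, the dropped nonnegative quadratic term, and the absorption using $\tau\le\frac14$. Your residual $S$, averaged over levels $0$ and $1$, is exactly the paper's term $A_5$, which the paper bounds as $\gamma\bigl(m_h(R_h\hat v^1,\hat\eta_v^1)-(\hat v^1,\Pi_h^{0,k}\hat\eta_v^1)\bigr)-\gamma\|\hat\eta_v^1\|_{m_h}^2\le Ch^{2k+2}$. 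So your ``main obstacle'' is just the consistency estimate you sketch at the end. No separate bound on $\|\eta_v^0\|$ is needed, because the level-$0$ relation $m_h(\eta_v^0,\psi_h)=a_h(\eta_u^0,\psi_h)+(S^0,\psi_h)$ holds by the definition of $v_h^0$.

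Two slips need fixing.

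First, since $\eta_u=R_hu-u_h$, the nonlinear term on the left of your error equation is $\bigl((u^0)^2\hat u^1-(\Pi_h^{0,k}u_h^0)^2\Pi_h^{0,k}\hat u_h^1,\Pi_h^{0,k}\phi_h\bigr)$, i.e.\ exact minus discrete, not the reverse. Only with this sign does $\bigl((\Pi_h^{0,k}u_h^0)^2\Pi_h^{0,k}\hat\eta_u^1,\Pi_h^{0,k}\hat\eta_u^1\bigr)\ge0$ sit on the left, where it can be dropped; with your sign it would enter with the wrong sign. Your split is also inconsistent, because the pieces $\Pi_h^{0,k}\hat\eta_u^1$ and $\Pi_h^{0,k}(\hat u_h^1-R_h\hat u^1)$ cancel each other. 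The remaining terms should be $[(u^0)^2-(\Pi_h^{0,k}u_h^0)^2]\hat u^1$, $(\Pi_h^{0,k}u_h^0)^2(\hat u^1-\Pi_h^{0,k}\hat u^1)$ and $(\Pi_h^{0,k}u_h^0)^2\Pi_h^{0,k}(\hat u^1-R_h\hat u^1)$, each of size $O(h^{k+1})$.

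Second, your final display contradicts your own warning about the lagged coefficient. The residual is $O(\tau+h^{k+1})$, not $O(\tau^2+h^{k+1})$. In addition, the $v$-consistency term leaves $C\tau h^{2k+2}$ after Young's inequality. The correct inequality is therefore
\[
\|\eta_u^1\|^2+\tau|\eta_u^1|_1^2\le C\tau(\tau+h^{k+1})\|\eta_u^1\|+C\tau h^{2k+2},
\]
which gives $\|\eta_u^1\|\le C(\tau^2+\tau^{1/2}h^{k+1})$ rather than $C(\tau^3+\tau h^{k+1})$.

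The stated bound $C(\tau^2+h^{k+1})$ for $\|\eta_u^1\|+\tau|\eta_u^1|_1$ still follows, so with these corrections the argument goes through.
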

\begin{proof}
By virtue of the discrete scheme \eqref{202606161007} together with the equation \eqref{202606142347} for \(t_{\frac{1}{2}}=\frac{\tau}{2}\), we can easily derive the following error equation:
\begin{align}
&\quad m_h(D_\tau \eta_u^1,\phi_h)+\gamma a_h(\hat{\eta}_v^1,\phi_h)+a_h(\hat{\eta}_u^1,\phi_h)-m_h(\hat{\eta}_u^1,\phi_h)\notag\\
&=m_h(D_\tau R_hu^1,\phi_h)+\gamma a_h(R_h\hat{v}^1,\phi_h)+a_h(R_h\hat{u}^1,\phi_h)-m_h(R_h\hat{u}^1,\phi_h)\notag\\
&\quad-m_h(D_\tau u_h^1,\phi_h)-\gamma a_h(\hat{v}_h^1,\phi_h)-a_h(\hat{u}
_h^1,\phi_h)+m_h(\hat{u}_h^1,\phi_h)\notag\\
&=m_h(D_\tau R_hu^1,\phi_h)+\gamma a_h(R_h\hat{v}^1,\phi_h)+a_h(R_h\hat{u}^1,\phi_h)-m_h(R_h\hat{u}^1,\phi_h)\notag\\
&\quad+((\Pi_{h}^{0,k}u_h^0)^2\Pi_{h}^{0,k}\hat{u}_h^1,\Pi_{h}^{0,k}\phi_h)\notag\\
&=m_h(D_\tau R_hu^1,\phi_h)-\gamma(\Delta \hat{v}^1,\Pi_{h}^{0,k}\phi_h)-(\Delta \hat{u}^1,\Pi_{h}^{0,k}\phi_h)-m_h(R_h\hat{u}^1,\phi_h)\notag\\
&\quad+((\Pi_{h}^{0,k}u_h^0)^2\Pi_{h}^{0,k}\hat{u}_h^1,\Pi_{h}^{0,k}\phi_h)\notag\\
&=m_h(D_\tau R_hu^1,\phi_h)-(D_\tau u^1,\Pi_{h}^{0,k}\phi_h)-((u^0)^2\hat{u}^1,\Pi_{h}^{0,k}\phi_h)+(\hat{u}^1,\Pi_{h}^{0,k}\phi_h)\notag\\
&\quad+(E_u^1,\Pi_{h}^{0,k}\phi_h)-m_h(R_h\hat{u}^1,\phi_h)\notag\\
&\quad+((\Pi_{h}^{0,k}u_h^0)^2\Pi_{h}^{0,k}\hat{u}_h^1,\Pi_{h}^{0,k}\phi_h),\quad \forall\phi_h\in V_h^k,\label{202606191846}
\end{align}
where the temporal discretization error $E^1$ is given by
\begin{align}
 E^1= D_\tau u^1-u(t_{\frac{1}{2}})-\gamma\Delta \hat{v}^1+\gamma\Delta v(t_{\frac{1}{2}})-\Delta \hat{u}^1+\Delta u(t_{\frac{1}{2}})+(u^0)^2\hat{u}^1-u^3(t_{\frac{1}{2}})-\hat{u}^1+u(t_{\frac{1}{2}}).\notag
 \end{align}
Obviously, applying the Taylor expansion theorem, one can easily obtain 
\begin{align}
\|E^1\|\leq C\tau.\label{202606192154}
\end{align}

 Due to the definition of the elliptic projection operator $R_h$ and the discrete equations \eqref{202606161004}-\eqref{202606190819}, we have  
 \begin{align}
 a_h(\hat{\eta}_u^1,\psi_h)&=a_h(R_h\hat{u}^1,\psi_h)-a_h(\hat{u}_h^1,\psi_h)\notag\\
 &=a_h(R_h\hat{u}^1,\psi_h)-m_h(\hat{v}_h^1,\psi_h)\notag\\
 &=-(\Delta\hat{u}^1,\Pi_{h}^{0,k}\psi_h)-m_h(\hat{v}_h^1,\psi_h)\notag\\
 &=(\hat{v}^1,\Pi_{h}^{0,k}\psi_h)-m_h(\hat{v}_h^1,\psi_h).\label{202606191838}
 \end{align}
 
Setting \(\phi_h=\hat{\eta}_u^1\) in \eqref{202606191846} and \(\psi_h=\gamma\hat{\eta}_v^1\) in the corresponding discrete relation \eqref{202606191838}, respectively, and subtracting the two resulting identities, we obtain
 \begin{align}
 m_h(D_\tau \eta_u^1,\hat{\eta}_u^1)+\|\hat{\eta}_u^1\|^2_{a_h}-\|\hat{\eta}_u^1\|^2_{m_h}=A_1+A_2+A_3+A_4+A_5,\notag
 \end{align}
 where 
 \begin{align}
 &A_1 = m_h(D_\tau R_hu^1,\hat{\eta}_u^1)-(D_\tau u^1,\Pi_{h}^{0,k}\hat{\eta}_u^1),\notag\\
 &A_2=(\hat{u}^1,\Pi_{h}^{0,k}\hat{\eta}_u^1)-m_h(R_h\hat{u}^1,\hat{\eta}_u^1),\notag\\
 &A_3 = -((u^0)^2\hat{u}^1,\Pi_{h}^{0,k}\hat{\eta}_u^1)+((\Pi_{h}^{0,k}u_h^0)^2\Pi_{h}^{0,k}\hat{u}_h^1,\Pi_{h}^{0,k}\hat{\eta}_u^1),\notag\\
 &A_4 = (E^1,\Pi_{h}^{0,k}\hat{\eta}_u^1),\notag\\
 &A_5 = \gamma m_h(\hat{v}_h^1,\hat{\eta}_v^1)-\gamma(\hat{v}^1,\Pi_{h}^{0,k}\hat{\eta}_v^1).\notag 
 \end{align}
 
 Using the simple relation
 \begin{align}
  m_h(D_\tau \eta_u^1,\hat{\eta}_u^1)=\frac{1}{2\tau}(\|\eta_u^1\|_{m_h}^2-\|\eta_u^0\|_{m_h}^2),\quad \eta_u^0=0,\notag
 \end{align}
 we have 
 \begin{align}
 \|\eta_u^1\|_{m_h}^2+2\tau\|\eta_u^1\|^2_{a_h}-2\tau\|\eta_u^1\|_{m_h}^2\leq 2\tau A_1+2\tau A_2+2\tau A_3+2\tau A_4+2\tau A_5.\label{202606200826}
 \end{align}
 
 Next, we aim to bound every term on the right-hand side of the foregoing equation.
 For the first term, using the Cauchy-Schwarz inequality and Young inequality, it holds that  
 \begin{align}
 2\tau A_1&=2\tau m_h(D_\tau R_hu^1,\hat{\eta}_u^1)-2\tau(D_\tau u^1,\Pi_{h}^{0,k}\hat{\eta}_u^1)\notag\\
 &=2\tau m_h(D_\tau R_hu^1-D_\tau \Pi_{h}^{0,k}u^1,\hat{\eta}_u^1)+2\tau m_h(D_\tau \Pi_{h}^{0,k}u^1,\hat{\eta}_u^1)-2\tau(D_\tau u^1,\Pi_{h}^{0,k}\hat{\eta}_u^1)\notag\\
 &=2\tau m_h(D_\tau R_hu^1-D_\tau \Pi_{h}^{0,k}u^1,\hat{\eta}_u^1)+2\tau(D_\tau \Pi_{h}^{0,k}u^1,\hat{\eta}_u^1)-2\tau(D_\tau u^1,\Pi_{h}^{0,k}\hat{\eta}_u^1)\notag\\
 &=2\tau m_h(D_\tau R_hu^1-D_\tau \Pi_{h}^{0,k}u^1,\hat{\eta}_u^1)\notag\\
 &\leq 2\tau \|D_\tau R_hu^1-D_\tau \Pi_{h}^{0,k}u^1\|_{m_h}\|\hat{\eta}_u^1\|_{m_h}\notag\\
 &\leq C\tau\|D_\tau R_hu^1-D_\tau \Pi_{h}^{0,k}u^1\|\|\eta_u^1\|_{m_h}\notag\\
 &\leq C \tau(\|D_\tau R_hu^1-D_\tau u^1\|+\|D_\tau u^1-D_\tau \Pi_{h}^{0,k}u^1\|)\|\eta_u^1\|_{m_h}\notag\\
 &\leq C (\|R_{h}(u^1-u^0)-(u^1-u^0)\|+\|\Pi_h^k(u^1-u^0)-(u^1-u^0)\|)\|\eta_u^1\|_{m_h}\notag\\
 &\leq \frac{1}{16}\|\eta_u^1\|_{m_h}^2+Ch^{2k+2},\label{202606200825}
 \end{align}
 where we have used the consisitency of the disrete mass operator $m_h(\cdot,\cdot)$, i.e., 
 \begin{align}
 m_h(D_\tau \Pi_h^ku^1,\hat{\eta}_u^1)= (D_\tau \Pi_h^ku^1,\hat{\eta}_u^1), \notag
 \end{align}
 the property of the $L^2$-projection
 \begin{align}
(D_\tau \Pi_h^ku^1,\hat{\eta}_u^1)=(D_\tau u^1,\Pi_{h}^{0,k}\hat{\eta}_u^1),\notag
 \end{align}
 and the approximation properties of the projection operator $R_h$ and $\Pi_h^k$
 \begin{align}
 &\|R_{h}(u^1-u^0)-(u^1-u^0)\|\leq Ch^{k+1}\|u^1-u^0\|_{k+1}\notag\\
 &\quad\leq 
 Ch^{k+1}\left\|\int_{t_0}^{t_1}u_t\text{d}s\right\|_{k+1}\leq Ch^{k+1} \int_{t_0}^{t_1}\|u_t\|_{k+1}\text{d}s,\notag\\
 &\|\Pi_h^k(u^1-u^0)-(u^1-u^0)\|\leq Ch^{k+1} \int_{t_0}^{t_1}\|u_t\|_{k+1}\text{d}s.\notag 
 \end{align} 
 
 For the second term, we adopt arguments analogous to those used for the first term. In fact, we have 
 \begin{align}
 2\tau A_2= (\hat{u}^1,\Pi_{h}^{0,k}\hat{\eta}_u^1)-m_h(R_h\hat{u}^1,\hat{\eta}_u^1)\leq \frac{1}{16}\|\eta_u^1\|_{m_h}^2+Ch^{2k+2}.\label{202606192145}
 \end{align}
 
 We now turn to the third term, namely the nonlinear term. For this, by virtue of the Holder inequality, it holds that  
\begin{align}
A_3 &=-((u^0)^2\hat{u}^1,\Pi_{h}^{0,k}\hat{\eta}_u^1)+((\Pi_{h}^{0,k}u_h^0)^2\Pi_{h}^{0,k}\hat{u}_h^1,\Pi_{h}^{0,k}\hat{\eta}_u^1)\notag\\
&=-((u^0)^2\hat{u}^1,\Pi_{h}^{0,k}\hat{\eta}_u^1)+((\Pi_{h}^{0,k}u^0)^2\hat{u}^1,\Pi_{h}^{0,k}\hat{\eta}_u^1)-((\Pi_{h}^{0,k}u^0)^2\hat{u}^1,\Pi_{h}^{0,k}\hat{\eta}_u^1)\notag\\
&\quad+((\Pi_{h}^{0,k}u^0)^2\Pi_{h}^{0,k}\hat{u}^1,\Pi_{h}^{0,k}\hat{\eta}_u^1)-((\Pi_{h}^{0,k}u^0)^2\Pi_{h}^{0,k}\hat{u}^1,\Pi_{h}^{0,k}\hat{\eta}_u^1)\notag\\
&\quad+((\Pi_{h}^{0,k}u^0_h)^2\Pi_{h}^{0,k}\hat{u}^1,\Pi_{h}^{0,k}\hat{\eta}_u^1)-((\Pi_{h}^{0,k}u_h^0)^2\Pi_{h}^{0,k}\hat{u}^1,\Pi_{h}^{0,k}\hat{\eta}_u^1)+((\Pi_{h}^{0,k}u_h^0)^2\Pi_{h}^{0,k}\hat{u}_h^1,\Pi_{h}^{0,k}\hat{\eta}_u^1)\notag\\
&\leq \frac{1}{16}\|\eta_u^1\|_{m_h}^2+Ch^{2k+2},
 \end{align}
where the following \(L^\infty\)-norm boundedness has been repeatedly utilized
\begin{align}
\|u_h^0\|_{0,\infty}=\|R_hu^0\|_{0,\infty}\leq C,\quad \|\Pi_{h}^{0,k}u_h^0\|_{0,\infty}\leq C\|u_h^0\|_{0,\infty}\leq C,\notag
\end{align}
and we further exploit the continuity property of the \(L^2\) projection operator
\begin{align}
\|\Pi_{h}^{0,k}\hat{u}_h^1\|\leq C\|\hat{u}_h^1\|.\notag 
\end{align}

Combining \eqref{202606192154} with the Cauchy–Schwarz inequality yields
\begin{align}
2\tau A_4&= 2\tau (E^1,\Pi_{h}^{0,k}\hat{\eta}_u^1)\notag\\
&\leq C\tau\|E^1\|\|\Pi_{h}^{0,k}\hat{\eta}_u^1\|\notag\\
&\leq \frac{1}{16}\|\eta_u^1\|_{m_h}^2+C\tau^{4}.
\end{align}

Lastly, we proceed to bound the fifth term. In fact, we have 
\begin{align}
A_5&=\gamma m_h(\hat{v}_h^1,\hat{\eta}_v^1)-\gamma(\hat{v}^1,\Pi_{h}^{0,k}\hat{\eta}_v^1)\notag\\
&=\gamma m_h(R_h\hat{v}^1,\hat{\eta}_v^1)-\gamma \|\hat{\eta}_v^1\|_{m_h}^2-\gamma(\hat{v}^1,\Pi_{h}^{0,k}\hat{\eta}_v^1)\notag\\
&\leq Ch^{2k+2}.\label{202606200824}
\end{align}

Substituting \eqref{202606200825}–\eqref{202606200824} into \eqref{202606200826}, we obtain
\begin{align}
(1-2\tau)\|\eta_u^1\|_{m_h}^2+2\tau\|\eta_u^1\|^2_{a_h}\leq \frac{1}{4}\|\eta_u^1\|_{m_h}^2+Ch^{2k+2}+C\tau^4.
\end{align} 
Therefore, when $\tau\leq\frac{1}{4}$, we have 
\begin{align}
\frac{1}{4}\|\eta_u^1\|_{m_h}^2+2\tau\|\eta_u^1\|^2_{a_h}\leq Ch^{2k+2}+C\tau^4.\label{202606210924}
\end{align}
Combined with the norm equivalence properties of $m_h$ and $a_h$, it holds that 
\begin{align}
\|u_h^1\|+\tau|u_h^1|_1\leq C(\tau^2+h^{k+1}). 
\end{align}
This completes the proof. 
\end{proof}
\begin{lemma}\label{lemma3.2}
Let \(u^n, v^n\) solve the continuous problem \eqref{202606142344}–\eqref{202606142346}, and let \(u_h^n, v_h^n\) denotes the solution to the fully discrete scheme \eqref{202606160945}-\eqref{202606161004}. 
Then we can find two constants $\tau_0$ and $h_0$ such that when $\tau\leq\tau_0$ and $h\leq h_0$ 
\begin{align}
\|\eta_h^n\|+\tau|\eta_h^n|_1\leq C(\tau^2+h^{k+1}),\quad 0\leq n\leq N.\label{202606200847}
\end{align}
\end{lemma}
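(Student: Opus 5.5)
We argue by induction on $n$, treating $\eta_h^n$ as the error $\eta_u^n=R_hu^n-u_h^n$ (with $\eta_v^n$ carried along). The cases $n=0$ and $n=1$ are already settled: $\eta_u^0=0$, and $n=1$ is Lemma \ref{lemma3.1}. The nonlinear term $((\Pi_h^{0,k}u_h^{n-1})^2\Pi_h^{0,k}\hat u_h^n,\Pi_h^{0,k}\phi_h)$ is the only obstruction to a Gronwall argument, since Theorem \ref{theorem3.3} gives only $L^2$ bounds. We therefore make the induction hypothesis
\[
\|\eta_u^m\|+\tau|\eta_u^m|_1\le C_2(\tau^2+h^{k+1}),\qquad 0\le m\le n-1,
\]
and first derive from it a uniform bound $\|u_h^{m}\|_{0,\infty}\le C$ for $m\le n-1$. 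This uses the splitting $u_h^m=R_hu^m-\eta_u^m$, where $\|R_hu^m\|_{0,\infty}\le C$ by the regularity of $u$.

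To bound $\|\eta_u^m\|_{0,\infty}$ we apply the inverse inequality for virtual element functions, $\|\eta_h\|_{0,\infty}\le Ch^{-1}\|\eta_h\|$ in two dimensions, and split into two cases.
\begin{itemize}
\item If $\tau\le h$, then $h^{-1}\|\eta_u^m\|\le C_2h^{-1}(\tau^2+h^{k+1})\le C_2(\tau+h^{k})$, which is at most $1$ once $\tau\le\tau_0$ and $h\le h_0$.
\item If $\tau>h$, the $L^2$ estimate is too weak. Instead we use the $H^1$ part of the hypothesis together with the two-dimensional (log-type) inverse inequality $\|\eta_h\|_{0,\infty}\le C|\ln h|^{1/2}|\eta_h|_1$. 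This gives $\|\eta_u^m\|_{0,\infty}\le C|\ln h|^{1/2}\tau^{-1}(\tau^2+h^{k+1})\le C|\ln h|^{1/2}(\tau+h^{k})$, which is small for $\tau,h$ small.
\end{itemize}
In the second case the cruder bound $h^{-1}|\eta_h|_1$ would only be usable if it were controlled by $\tau^{-1}$ via $\tau>h$, so the logarithmic inequality is the route to pursue. This is the reason for carrying $\tau|\eta_u|_1$ in \eqref{202606200847}. In both cases $\|\Pi_h^{0,k}u_h^{m}\|_{0,\infty}\le C$ for $m\le n-1$, with $C$ independent of $C_2$.

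With this bound in hand, we mimic the proof of Lemma \ref{lemma3.1} at a general level $n\ge2$. We write the error equation by subtracting \eqref{202606160945} from the PDE at $t_{n-1}$, which uses the leap-frog truncation error $E^n$ with $\|E^n\|\le C\tau^2$, and we use $R_h$ and \eqref{202606160946} as in \eqref{202606191838}. We then test with $\phi_h=\hat\eta_u^n$ and $\psi_h=\gamma\hat\eta_v^n$, which yields
\[
\frac{1}{4\tau}\big(\|\eta_u^n\|_{m_h}^2-\|\eta_u^{n-2}\|_{m_h}^2\big)+\|\hat\eta_u^n\|_{a_h}^2\le C\big(\|\hat\eta_u^n\|^2+\|\eta_u^{n-1}\|^2\big)+C(\tau^2+h^{k+1})^2.
\]
The terms analogous to $A_1,A_2,A_4,A_5$ are handled exactly as before. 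The nonlinear term is split as in $A_3$, with $u^{n-1}$ and $u_h^{n-1}$ in place of $u^0$ and $u_h^0$. The difference $(\Pi_h^{0,k}u^{n-1})^2-(\Pi_h^{0,k}u_h^{n-1})^2$ is bounded by $C\|\eta_u^{n-1}\|+Ch^{k+1}$ using the $L^\infty$ bound from the induction step. We sum over levels of the same parity, multiply by $4\tau$, and apply Lemma \ref{lemma3} (for $\tau\le\tau_0\le 1/(2b)$). This gives
\[
\|\eta_u^n\|^2+\tau\sum_j\|\hat\eta_u^j\|_{a_h}^2\le C(\tau^2+h^{k+1})^2.
\]

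The $H^1$ part is recovered by Lemma \ref{lemma2.1} applied in $|\cdot|_1$, which gives $|\eta_u^n|_1\le 2\sum_j|\hat\eta_u^j|_1$. By Cauchy--Schwarz, $\tau|\eta_u^n|_1\le C\tau\,(n\tau^{-1})^{1/2}\big(\tau\sum_j|\hat\eta_u^j|_1^2\big)^{1/2}\le C\tau^{1/2}(\tau^2+h^{k+1})$. This is even better than needed, and because the parity classes are coupled only through the $\eta_u^{n-1}$ term it is uniform in $n$. Choosing $C_2$ as the final constant, which does not depend on the $L^\infty$ step, closes the induction.

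The main obstacle is the case $\tau>h$ in the $L^\infty$ bound. If the logarithmic inverse inequality is unavailable, the fallback is to strengthen the induction to an $H^1$ estimate of order $\tau^{3/2}+h^{k}$ and apply $\|\cdot\|_{0,\infty}\le C|\ln h|^{1/2}|\cdot|_1$ there.
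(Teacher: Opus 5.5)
\textbf{Gap: the case $\tau>h$.} From the hypothesis you get $|\eta_u^m|_1\le C_2(\tau+h^{k})$. The discrete Sobolev inequality then gives $\|\eta_u^m\|_{0,\infty}\le CC_2|\ln h|^{1/2}(\tau+h^k)$.

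The factor $h^k|\ln h|^{1/2}$ is harmless. But $\tau|\ln h|^{1/2}$ is \emph{not} small uniformly on the region $\tau\le\tau_0$, $h\le h_0$, $h<\tau$. Fix $\tau=\tau_0$ and let $h\to 0$: for instance $h=e^{-\tau_0^{-4}}$ gives $\tau_0|\ln h|^{1/2}=\tau_0^{-1}$. So your $L^\infty$ bound on $u_h^{n-1}$ only holds under a coupling such as $\tau\le c|\ln h|^{-1/2}$, which defeats the unconditional claim.

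Your fallback does not help either, since an $H^1$ rate $\tau^{3/2}+h^k$ still leaves $\tau^{3/2}|\ln h|^{1/2}$ unbounded. That improved rate is also not what your computation yields. We have $\tau(n\tau^{-1})^{1/2}=(n\tau)^{1/2}\le T^{1/2}$, not $\tau^{1/2}$. So Lemma~\ref{lemma2.1} gives exactly $\tau|\eta_u^n|_1\le C(\tau^2+h^{k+1})$, which is what is needed, but nothing better. In two dimensions an $H^1$ bound of size $O(\tau)$ cannot be turned into a uniform $L^\infty$ bound when $h\ll\tau$.

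\textbf{What the paper does instead.} No $L^\infty$ control of $u_h^{n-1}$ is needed. The paper's case split only produces $|\eta_u^{m}|_1\le 1$, and this holds with no logarithm: $|\eta_u^m|_1\le Ch^{-1}\|\eta_u^m\|\le C(\tau+h^k)$ if $\tau\le h$, and $|\eta_u^m|_1\le C(\tau+h^{k})$ directly from the hypothesis if $\tau>h$. Three steps make this weaker bound suffice.
\begin{enumerate}
\item Write $\hat u_h^n=R_h\hat u^n-\hat\eta_u^n$. The term $-((\Pi_h^{0,k}u_h^{n-1})^2\Pi_h^{0,k}\hat\eta_u^n,\Pi_h^{0,k}\hat\eta_u^n)$ is then nonpositive and can be dropped.
\item Expand the difference of squares using $u_h^{n-1}=R_hu^{n-1}-\eta_u^{n-1}$. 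After this, $u_h^{n-1}$ only ever multiplies the bounded function $\Pi_h^{0,k}R_h\hat u^n$.
\item The only genuinely nonlinear remainder is $C\|\Pi_h^{0,k}\eta_u^{n-1}\|_{0,4}^2\|\Pi_h^{0,k}\hat\eta_u^n\|$. The Gagliardo--Nirenberg inequality $\|\eta_u^{n-1}\|_{0,4}^2\le C\|\eta_u^{n-1}\||\eta_u^{n-1}|_1\le C\|\eta_u^{n-1}\|$ turns it into a Gronwall-admissible term.
\end{enumerate}
With this in place of your $L^\infty$ step, the rest of your argument goes through as in the paper: the error equation, testing with $\hat\eta_u^n$ and $\gamma\hat\eta_v^n$, Lemma~\ref{lemma3}, and Lemma~\ref{lemma2.1} for the $H^1$ part.
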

\begin{proof}
We adopt the method of mathematical induction to verify that formula \ref{202606200847} holds. It follows from Lemma \ref{lemma3.1} and the definition of \(u_h^0\) that the inequality holds for \(n=0,1\). Assume the conclusion holds for \(k\leq n-1\),  i.e., 
\begin{align}
\|\eta_h^k\|+\tau|\eta_h^k|_1\leq C(\tau^2+h^{k+1}),\quad 0\leq k\leq n-1
.\label{202606200854}
\end{align}
When $\tau\leq h$ and $h$ is sufficiently small, making use of the inverse inequality for virtual element function, we have 
\begin{align}
|\eta_u^k|_1\leq Ch^{-1}\|\eta_u^k\|\leq Ch\leq 1, \quad 0\leq k\leq n-1.
\end{align}
When \(\tau> h\), using \eqref{202606200854}, we have
\begin{align}
|\eta_u^k|_1\leq C\tau\leq1,\quad 0\leq k\leq n-1.
\end{align}
From the above estimates, it follows that
\begin{align}
|\eta_u^k|_1\leq1,\quad 0\leq k\leq n-1.\label{202606201538}
\end{align}

Next, we aim to prove that the conclusion holds for $n$. To this end, from the fully discrete numerical scheme \eqref{202606160945}–\eqref{202606160946}, we have
\begin{align}
&\quad m_h(D_\tau \eta_u^n,\phi_h)+\gamma a_h(\hat{\eta}_v^n,\phi_h)+a_h(\hat{\eta}_u^n,\phi_h)-m_h(\hat{\eta}_u^n,\phi_h)\notag\\
&=m_h(D_\tau R_hu^n,\phi_h)+\gamma a_h(R_h\hat{v}^n,\phi_h)+a_h(R_h\hat{u}^n,\phi_h)-m_h(R_h\hat{u}^n,\phi_h)\notag\\
&\quad-m_h(D_\tau u_h^n,\phi_h)-\gamma a_h(\hat{v}_h^n,\phi_h)-a_h(\hat{u}
_h^n,\phi_h)+m_h(\hat{u}_h^n,\phi_h)\notag\\
&=m_h(D_\tau R_hu^n,\phi_h)+\gamma a_h(R_h\hat{v}^n,\phi_h)+a_h(R_h\hat{u}^n,\phi_h)-m_h(R_h\hat{u}^n,\phi_h)\notag\\
&\quad+((\Pi_{h}^{0,k}u_h^{n-1})^2\Pi_{h}^{0,k}\hat{u}_h^n,\Pi_{h}^{0,k}\phi_h)\notag\\
&=m_h(D_\tau R_hu^n,\phi_h)-\gamma(\Delta \hat{v}^n,\Pi_{h}^{0,k}\phi_h)-(\Delta \hat{u}^n,\Pi_{h}^{0,k}\phi_h)-m_h(R_h\hat{u}^n,\phi_h)\notag\\
&\quad+((\Pi_{h}^{0,k}u_h^{n-1})^2\Pi_{h}^{0,k}\hat{u}_h^n,\Pi_{h}^{0,k}\phi_h)\notag\\
&=m_h(D_\tau R_hu^n,\phi_h)-(D_\tau u^n,\Pi_{h}^{0,k}\phi_h)-((u^{n-1})^2\hat{u}^n,\Pi_{h}^{0,k}\phi_h)+(\hat{u}^n,\Pi_{h}^{0,k}\phi_h)\notag\\
&\quad+(E^n,\Pi_{h}^{0,k}\phi_h)-m_h(R_h\hat{u}^n,\phi_h)\notag\\
&\quad+((\Pi_{h}^{0,k}u_h^{n-1})^2\Pi_{h}^{0,k}\hat{u}_h^n,\Pi_{h}^{0,k}\phi_h),\quad \forall\phi_h\in V_h^k,\quad 2\leq n\leq N,\label{202606200952}
\end{align}
where the time discretization error $E^n$ at the $(n-1)$-th time level is defined as  
\begin{align}
E^n&= D_\tau u^{n}-u(t_{n-1})-\gamma\Delta \hat{v}^n+\gamma\Delta v(t_{n-1})\notag\\
&\quad-\Delta \hat{u}^n+\Delta u(t_{n-1})+(u^{n-1})^2\hat{u}^n-u^3(t_{n-1})-\hat{u}^n+u(t_{n-1}).\notag
\end{align}

Employing a similar argument as in \eqref{202606191838}, we arrive at 
\begin{align}
a_h(\hat{\eta}_u^n,\psi_h)&=a_h(R_h\hat{u}^n,\psi_h)-a_h(\hat{u}_h^n,\psi_h)\notag\\
&=a_h(R_h\hat{u}^n,\psi_h)-m_h(\hat{v}_h^n,\psi_h)\notag\\
&=-(\Delta\hat{u}^n,\Pi_{h}^{0,k}\psi_h)-m_h(\hat{v}_h^n,\psi_h)\notag\\
&=(\hat{v}^n,\Pi_{h}^{0,k}\psi_h)-m_h(\hat{v}_h^n,\psi_h)\label{202606201010}. 
\end{align}

Substituting \(\phi_h=\hat{\eta}_u^n\) into \eqref{202606200952} and \(\psi_h=\gamma\hat{\eta}_v^n\) into the corresponding discrete equation \eqref{202606201010}, and subtracting the two resulting equalities, we obtain
 \begin{align}
m_h(D_\tau \eta_u^n,\hat{\eta}_u^n)+\|\hat{\eta}_u^n\|^2_{a_h}-\|\hat{\eta}_u^n\|^2_{m_h}=B_1+B_2+B_3+B_4+B_5,\label{202606201014}
\end{align}
where 
\begin{align}
&B_1 = m_h(D_\tau R_hu^n,\hat{\eta}_u^n)-(D_\tau u^n,\Pi_{h}^{0,k}\hat{\eta}_u^n),\notag\\
&B_2=(\hat{u}^n,\Pi_{h}^{0,k}\hat{\eta}_u^n)-m_h(R_h\hat{u}^n,\hat{\eta}_u^n),\notag\\
&B_3 = -((u^{n-1})^2\hat{u}^n,\Pi_{h}^{0,k}\hat{\eta}_u^n)+((\Pi_{h}^{0,k}u_h^{n-1})^2\Pi_{h}^{0,k}\hat{u}_h^n,\Pi_{h}^{0,k}\hat{\eta}_u^n),\notag\\
&B_4 = (E^n,\Pi_{h}^{0,k}\hat{\eta}_u^n),\notag\\
&B_5 = \gamma m_h(\hat{v}_h^n,\hat{\eta}_v^n)-\gamma(\hat{v}^n,\Pi_{h}^{0,k}\hat{\eta}_v^n).\notag 
\end{align}

By following the estimates of \(A_1, A_2, A_4, A_5\) presented in Lemma \ref{lemma3.1}, we can readily derive the following bounds for \(B_1, B_2, B_4, B_5\), i.e., 
\begin{align}
B_1+B_2+B_4+B_5\leq C(\|\eta_h^n\|^2_{m_h}+\|\eta_h^{n-2}\|^2_{m_h}+\tau^4+h^{2k+2}).\notag
\end{align}
For the estimate of $B_3$, it holds that 
\begin{align}
B_3 &=-((u^{n-1})^2\hat{u}^n,\Pi_{h}^{0,k}\hat{\eta}_u^n)+((\Pi_{h}^{0,k}u_h^{n-1})^2\Pi_{h}^{0,k}\hat{u}_h^n,\Pi_{h}^{0,k}\hat{\eta}_u^n),\notag\\
&=-((u^{n-1})^2\hat{u}^n,\Pi_{h}^{0,k}\hat{\eta}_u^n)+((\Pi_{h}^{0,k}u^{n-1})u^{n-1}\hat{u}^n,\Pi_{h}^{0,k}\hat{\eta}_u^n)-((\Pi_{h}^{0,k}u^{n-1})u^{n-1}\hat{u}^n,\Pi_{h}^{0,k}\hat{\eta}_u^n)\notag\\
&\quad+((\Pi_{h}^{0,k}u^{n-1})^2\hat{u}^n,\Pi_{h}^{0,k}\hat{\eta}_u^n)-((\Pi_{h}^{0,k}u^{n-1})^2\hat{u}^1,\Pi_{h}^{0,k}\hat{\eta}_u^n)\notag\\
&\quad+((\Pi_{h}^{0,k}u^{n-1})^2\Pi_{h}^{0,k}\hat{u}^n,\Pi_{h}^{0,k}\hat{\eta}_u^n)-((\Pi_{h}^{0,k}u^{n-1})^2\Pi_{h}^{0,k}\hat{u}^n,\Pi_{h}^{0,k}\hat{\eta}_u^n)\notag\\
&\quad+((\Pi_{h}^{0,k}u_h^{n-1})^2\Pi_{h}^{0,k}\hat{u}_h^n,\Pi_{h}^{0,k}\hat{\eta}_u^n)=:B_{31}+B_{32}+B_{33}+B_{34}.\label{202606201035}
\end{align}

Subsequently, we estimate each term in identity \eqref{202606201035}. In fact, by virtue of the Cauchy–Schwarz inequality and Young inequality, we arrive at
\begin{align}
B_{31} &= -((u^{n-1})^2\hat{u}^n,\Pi_{h}^{0,k}\hat{\eta}_u^n)+((\Pi_{h}^{0,k}u^{n-1})u^{n-1}\hat{u}^n,\Pi_{h}^{0,k}\hat{\eta}_u^n)\notag\\
& \leq \|u^{n-1}\hat{u}^n\|_{0,\infty}\|u^{n-1}-\Pi_{h}^{0,k}u^{n-1}\|\|\Pi_{h}^{0,k}\hat{\eta}_u^n\|\notag\\
&\leq C\|\eta_u^n\|^2_{m_h}+C\|\eta_u^{n-2}\|^2_{m_h}+Ch^{2k+2}.
\notag \end{align}
 
Following the similar procedure yields 
\begin{align}
B_{32}+B_{33}\leq C\|\eta_u^n\|^2_{m_h}+C\|\eta_u^{n-2}\|^2_{m_h}+Ch^{2k+2}.\notag
\end{align}

Employing the following facts,
\begin{align}
\hat{u}^n_h=R_h\hat{u}^n-\hat{\eta}_u^n, \quad \hat{u}^n= \hat{u}^n-R_h\hat{u}^n+R_h\hat{u}^n=:\hat{\xi}^n+R_h\hat{u}^n,\notag
\end{align}
and the Gagliardo–Nirenberg interpolation inequality
\begin{align}
\|\eta_u^{n-1}\|_{0,4}^2\leq C\|\eta_u^{n-1}\||\eta_u^{n-1}|_1,\notag
\end{align}
we arrive at 
\begin{align}
B_{34}&=-((\Pi_{h}^{0,k}u^{n-1})^2\Pi_{h}^{0,k}\hat{u}^n,\Pi_{h}^{0,k}\hat{\eta}_u^n)+((\Pi_{h}^{0,k}u_h^{n-1})^2\Pi_{h}^{0,k}\hat{u}_h^n,\Pi_{h}^{0,k}\hat{\eta}_u^n)\notag\\
&=-((\Pi_{h}^{0,k}u^{n-1})^2\Pi_{h}^{0,k}\hat{u}^n,\Pi_{h}^{0,k}\hat{\eta}_u^n)+((\Pi_{h}^{0,k}u_h^{n-1})^2(\Pi_{h}^{0,k}R_h\hat{u}^n),\Pi_{h}^{0,k}\hat{\eta}_u^n)\notag\\
&\quad-((\Pi_{h}^{0,k}u_h^{n-1})^2\Pi_{h}^{0,k}\hat{\eta}_u^n,\Pi_{h}^{0,k}\hat{\eta}_u^n)\notag\\
&\leq  -((\Pi_{h}^{0,k}u^{n-1})^2\Pi_{h}^{0,k}\hat{\xi}^n,\Pi_{h}^{0,k}\hat{\eta}_u^n)-((\Pi_{h}^{0,k}u^{n-1})^2(\Pi_{h}^{0,k}R_h\hat{u}^n),\Pi_{h}^{0,k}\hat{\eta}_u^n)\notag\\
&\quad+((\Pi_{h}^{0,k}u_h^{n-1})^2(\Pi_{h}^{0,k}R_h\hat{u}^n),\Pi_{h}^{0,k}\hat{\eta}_u^n)\notag\\
&\leq -((\Pi_{h}^{0,k}u^{n-1})^2\Pi_{h}^{0,k}\hat{\xi}^n,\Pi_{h}^{0,k}\hat{\eta}_u^n)\notag\\
&\quad-((\Pi_{h}^{0,k}\xi^{n-1}+\Pi_{h}^{0,k}\eta_u^{n-1})(\Pi_{h}^{0,k}R_hu^{n-1}-\Pi_{h}^{0,k}\eta_u^{n-1}+\Pi_{h}^{0,k}u^{n-1})(\Pi_{h}^{0,k}R_h\hat{u}^n),\Pi_{h}^{0,k}\hat{\eta}_u^n)\notag\\
&\leq C\|\Pi_{h}^{0,k}\eta_u^{n-1}\|_{0,4}^2\|\Pi_{h}^{0,k}\hat{\eta}_u^n\|+C\|\Pi_{h}^{0,k}\eta_u^n\|^2+Ch^{2k+2}+C\|\Pi_{h}^{0,k}\eta_u^{n-1}\|^2+C\|\Pi_{h}^{0,k}\eta_u^{n-2}\|^2\label{202606201513}\notag\\
&\leq C\|\eta_u^{n-1}\|_{0,4}^2\|\Pi_{h}^{0,k}\hat{\eta}_u^n\|+C\|\Pi_{h}^{0,k}\eta_u^n\|^2+Ch^{2k+2}+C\|\Pi_{h}^{0,k}\eta_u^{n-1}\|^2+C\|\Pi_{h}^{0,k}\eta_u^{n-2}\|^2\notag\\
&\leq C\|\eta_u^{n-1}\||\eta_u^{n-1}|_1\|\Pi_{h}^{0,k}\hat{\eta}_u^n\|+C\|\Pi_{h}^{0,k}\eta_u^n\|^2+Ch^{2k+2}+C\|\Pi_{h}^{0,k}\eta_u^{n-1}\|^2+C\|\Pi_{h}^{0,k}\eta_u^{n-2}\|^2\notag\\
&\leq C\|\eta_u^{n-1}\|\|\Pi_{h}^{0,k}\hat{\eta}_u^n\|+C\|\Pi_{h}^{0,k}\eta_u^n\|^2+Ch^{2k+2}+C\|\Pi_{h}^{0,k}\eta_u^{n-1}\|^2+C\|\Pi_{h}^{0,k}\eta_u^{n-2}\|^2\notag\\
&\leq C(\|\eta_u^n\|_{m_h}^2+\|\eta_u^{n-1}\|_{m_h}^2+\|\eta_u^{n-2}\|_{m_h}^2)+Ch^{2k+2},\quad 2\leq n\leq N\notag,
\end{align}
where we have also used \eqref{202606201538}. From the foregoing bounds, it follows that
\begin{align}
B_3\leq C(\|\eta_u^n\|_{m_h}^2+\|\eta_u^{n-1}\|_{m_h}^2+\|\eta_u^{n-2}\|_{m_h}^2)+Ch^{2k+2},\quad 2\leq n\leq N.\notag
\end{align}

Substituting the estimates of \(B_1,B_2,B_3,B_4,B_5\) into \eqref{202606201014}, taking \(\tau\) sufficiently small, summing over both sides, and multiplying the resulting equation by \(\tau\), we obtain
\begin{align}
\|\eta_u^j\|_{m_h}^2+\|\eta_u^{j-1}\|_{m_h}^2-\|\eta_u^1\|_{m_h}^2+\tau\sum\limits_{k=2}^{j}\|\hat{\eta}_u^k\|^2_{a_h}\leq  \tau\sum\limits_{k=1}^{j}\|\eta_u^k\|_{m_h}^2+C(\tau^4+h^{2k+2}),\quad 2\leq j\leq n,\notag
\end{align}
where we have utilized 
\begin{align}
m_h(D_\tau \eta_u^j,\hat{\eta}_u^j)=\frac{1}{4\tau}(\| \eta_u^j\|_{m_h}^2-\|\eta_u^{j-2}\|_{m_h}^2),\quad \eta_u^0=0.\notag
\end{align}

Adding the above inequality to \eqref{202606210924}, we obtain
\begin{align}
\|\eta_u^j\|_{m_h}^2+\tau\sum\limits_{k=1}^{j}\|\hat{\eta}_u^j\|^2_{a_h}\leq \tau  \sum\limits_{k=1}^{j}\|\eta_u^k\|_{m_h}^2+C(\tau^4+h^{2k+2}),\quad 2\leq j\leq n. \notag 
\end{align}
Obviously, it is clear that the above formula also holds for \(j=1\), i.e., 
\begin{align}
	\|\eta_u^j\|_{m_h}^2+\tau\sum\limits_{k=1}^{j}\|\hat{\eta}_u^k\|^2_{a_h}\leq \tau \sum\limits_{k=1}^{j}\|\eta_u^k\|_{m_h}^2+C(\tau^4+h^{2k+2}),\quad 1\leq j\leq n. \notag
\end{align}
By using the Gronwall inequality given in Lemma \ref{lemma3.1}, we obtain 
\begin{align}
\|\eta_u^n\|_{m_h}^2+\tau\sum\limits_{k=1}^{n}\|\hat{\eta}_u^n\|^2_{a_h}\leq C(\tau^4+h^{2k+2}),\quad 2\leq n\leq N.\notag 
\end{align}

Finally, using the shift inequality in Lemma \ref{lemma2.1}, we have
\begin{align}
\tau \|\eta_u^n\|_{a_h}&\leq C\tau\sum\limits_{k=1}^{n}\|\hat{\eta}_u^k\|_{a_h}\notag\\
&\leq C\sum\limits_{k=1}^{n}\tau^{\frac{1}{2}}(\tau^{\frac{1}{2}}\|\hat{\eta}_u^k\|_{a_h})\notag\\
&\leq \sqrt{n\tau}\sqrt{\sum\limits_{k=1}^{n}\tau\|\hat{\eta}_u^k\|_{a_h}^2}\notag\\
&\leq C(\tau^2+h^{k+1}).
\end{align}
Therefore, the conclusion \eqref{202606200847} is true.  
\end{proof}
\begin{theorem}
Suppose \(u^n, v^n\) are the solutions to the continuous problem \eqref{202606142344}–\eqref{202606142346}, while \(u_h^n, v_h^n\) denote the approximate solutions to the fully discrete scheme \eqref{202606160945}–\eqref{202606161004}.
Then, for \(0<\tau\leq\frac{1}{3}\) and \(h>0\), we have
\begin{align}
\|u^n-u_h^n\|\leq C(\tau^{4}+h^{k+1}),\quad 0\leq n\leq N.
\end{align}
\end{theorem}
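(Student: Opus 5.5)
The plan is to split the error through the elliptic projection $R_h$ of \eqref{202606190820} and to treat two parameter regimes separately. Write
\begin{align*}
u^n-u_h^n=(u^n-R_hu^n)+\eta_u^n ,\qquad \eta_u^n=R_hu^n-u_h^n .
\end{align*}
The projection error satisfies $\|u^n-R_hu^n\|\le Ch^{k+1}\|u^n\|_{k+1}\le Ch^{k+1}$, assuming the usual regularity $u\in L^\infty((0,T);H^{k+1}(\Omega))$. Everything therefore reduces to bounding $\|\eta_u^n\|$.

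\emph{Regime 1: $\tau>\tau_0$ or $h>h_0$.} Here $\tau_0,h_0$ are the thresholds of Lemma \ref{lemma3.2}. I would not use the error equation at all, only stability. Since $\tau\le\frac13$, Theorem \ref{theorem3.3} applies and gives $\|u_h^n\|\le C_1\|u_h^0\|=C_1\|R_hu(0)\|\le C$. Hence
\begin{align*}
\|u^n-u_h^n\|\le \|u^n\|+\|u_h^n\|\le C_*,
\end{align*}
with $C_*$ independent of $\tau$ and $h$. If $\tau>\tau_0$, then $C_*\le C_*\tau_0^{-4}\tau^4$. If $h>h_0$, then $C_*\le C_*h_0^{-(k+1)}h^{k+1}$. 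In either case the stated bound $C(\tau^4+h^{k+1})$ holds after enlarging $C$. This is what removes the smallness restrictions of Lemma \ref{lemma3.2} and matches the hypotheses of the theorem, namely only $\tau\le\frac13$ and $h>0$ arbitrary.

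\emph{Regime 2: $\tau\le\tau_0$ and $h\le h_0$.} Here I invoke Lemma \ref{lemma3.2}, whose induction already incorporates the case distinction $\tau\le h$ versus $\tau>h$ via the inverse inequality, so that $|\eta_u^{k}|_1\le 1$. In particular I would use the Gronwall conclusion reached inside its proof, via Lemma \ref{lemma3},
\begin{align*}
\|\eta_u^n\|_{m_h}^2+\tau\sum_{k=1}^{n}\|\hat\eta_u^k\|_{a_h}^2\le C(\tau^4+h^{2k+2}),
\end{align*}
together with the norm equivalence of $m_h$. This yields the error in the form with $\tau^4$ and $h^{2k+2}$ on the right. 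Combined with the projection error, it gives $\|u^n-u_h^n\|^2\le C(\tau^4+h^{2k+2})$. Taking square roots gives $\|u^n-u_h^n\|\le C(\tau^2+h^{k+1})$.

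\textbf{Main obstacle.} The difficulty is exclusively the temporal exponent in Regime 2. To get $\tau^4$ for the unsquared norm, the Gronwall argument would need $\|E^n\|=O(\tau^4)$. For the leap-frog/averaged scheme this fails: Taylor expansion about $t_{n-1}$ gives only $\|E^n\|=O(\tau^2)$, and at the first Crank--Nicolson step only $O(\tau)$, cf. \eqref{202606192154}. The argument therefore yields $\tau^4$ exactly in the squared quantity $\|u^n-u_h^n\|^2$, or equivalently in $\|\eta_u^n\|_{m_h}^2$. As I see it, the stated rate $\tau^4$ alongside $h^{k+1}$ is most consistently read as this squared-norm estimate. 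I would first attempt a sharper truncation analysis of $E^n$ to check whether any cancellation improves the order. I expect none, and would then state the conclusion in the squared form above.
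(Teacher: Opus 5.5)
Your argument is essentially the paper's: split $u^n-u_h^n$ through $R_h$, use Lemma~\ref{lemma3.2} when $\tau\le\tau_0$ and $h\le h_0$, and in the complementary regimes use the stability bound of Theorem~\ref{theorem3.3} together with a lower bound on $\tau$ or $h$ (the paper writes this as $\tau^2+h^{k+1}\ge C_2$). Your diagnosis of the time exponent is also right: the paper's own proof concludes only $\|u^n-u_h^n\|\le C(\tau^2+h^{k+1})$, so the $\tau^4$ in the statement is best read as a misprint for $\tau^2$ rather than as a squared-norm estimate (a squared form would pair $\tau^4$ with $h^{2k+2}$, not $h^{k+1}$), and what you prove is exactly what the paper proves.
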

\begin{proof}
By employing the triangle inequality and Lemma \ref{lemma3.2}, when $\tau\leq\tau_0$ and $h\leq h_0$, it holds that 
\begin{align}
\|u-u_h^n\|\leq \|\xi^n\|+\|\eta_u^n\|\leq C(\tau^{2}+h^{k+1}),\quad 0\leq n \leq N.
\end{align}	
Under the three scenarios: \(\tau>\tau_0, h\leq h_0\), \(\tau>\tau_0, h>h_0\) and \(\tau\leq \tau_0, h>h_0\), it is straightforward to verify the existence of a positive constant $C_2$ such that
\begin{align}
	\tau^2+h^{k+1}\geq C_2.
\end{align}
Therefore, with the help of the Theorem \ref{theorem3.3}, we have 
\begin{align}
\|u^n-u_h^n\|\leq \|u^n\|+\|u_h^n\|&\leq \|u^n\|+C_1\|u_h^0\|\notag\\
&\leq \frac{1}{C_2}(\|u^n\|+C_1\|u_h^0\|)(\tau^2+h^{k+1})\notag\\
&\leq C(\tau^2+h^{k+1}).
\end{align}
The proof is completed. 
\end{proof}
\section{Numerical examples}\label{section4}

In this section, two numerical examples are presented to verify the correctness of the foregoing theoretical analysis. All numerical results are implemented using MATLAB R2025a. Since the virtual element basis functions cannot be expressed explicitly, we adopt the following numerical error to verify the theoretical convergence orders:
\begin{align}
	L^2\text{-error}=\left(\sum\limits_{K\in \mathcal{T}_h}\|u^N-\Pi_{K}^{0,k}u_h^N\|_{K}^2\right)^{1/2},
	\quad H^1\text{-error}=\left(\sum\limits_{K\in \mathcal{T}_h}|u^N-\Pi_{K}^{1,k}u_h^N|^2_{1,K}\right)^{1/2}.\notag
\end{align}
\begin{example}\label{example1}
In the first example, we choose $\gamma=1$ in \eqref{202606142344}-\eqref{202606142346}, the exact solution is taken as 
\begin{align}
u(x,y,t)= e^{-t}\sin(\pi x)\sin(\pi y),\notag
\end{align}
and the right-hand side function is derived from the above exact solution. 

This example aims to verify the convergence accuracy of the numerical solutions using a constructed artificial exact solution. Numerical simulations are carried out on two types of polygonal meshes, namely non-convex meshes and Voronoi meshes (see Figure \ref{figure1}).
To test the spatial convergence accuracy, we set \(T=1\mathrm{e}{-4}\) and \(\tau=1\mathrm{e}{-6}\), and adopt a series of spatial refinement parameters. Numerical results displayed in Tables \ref{table1}–\ref{table4} indicate that the proposed numerical scheme achieves the optimal convergence orders in the \(L^2\)-norm and \(H^1\)-norm. To examine the temporal convergence accuracy, we set \(k=2\) and \(\tau=h^2\). The corresponding numerical results are presented in Table \ref{table5}, which demonstrate that the temporal convergence order reaches the expected second-order accuracy. The above numerical results fully demonstrate the effectiveness of the proposed numerical algorithm and validate the correctness of the previous theoretical analysis.
\begin{figure}[!h]
	\centering
	\subfigure[Mesh with non-convex elements]{\begin{minipage}[t]{0.48\linewidth}
			\includegraphics[width=1\linewidth]{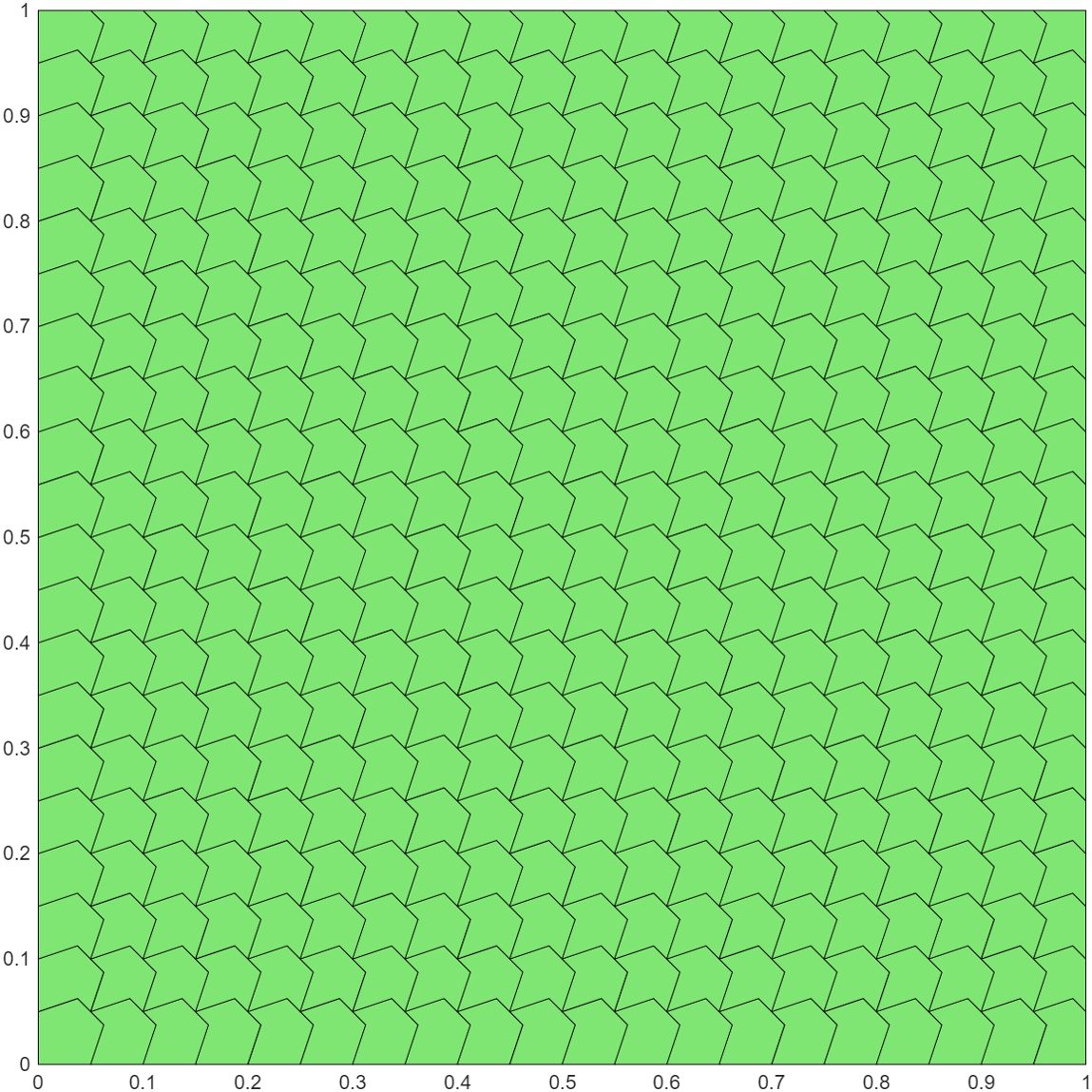}
		\end{minipage}
	}
	\subfigure[Mesh with Voronoi elements ]{\begin{minipage}[t]{0.48\linewidth}
			\includegraphics[width=1\linewidth]{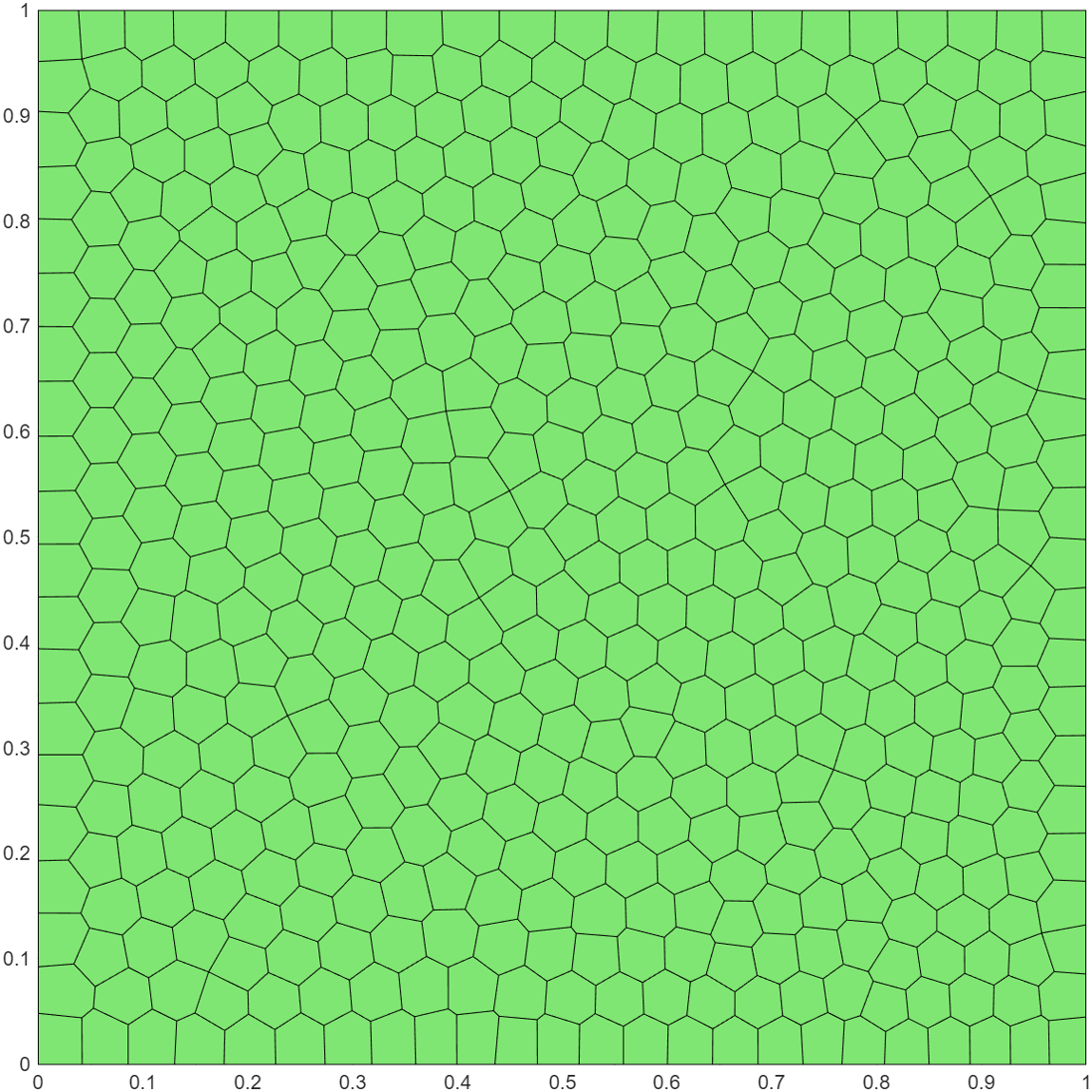}
		\end{minipage}
	}
	\caption{Polygonal meshes setup corresponding to Example \ref{example1}.}
	\label{figure1}
\end{figure}
\end{example}

\begin{table}
	\centering
	\caption{Convergence order in spatial direction for Example \ref{example1} on Voronoi meshes with \(k=1\).}
	\begin{tabular}{ccccc}
		\toprule 
		$h$ & $L^2$-error & $\text{Order}$ & $H^1$-error & $\text{Order}$\\
		\midrule
			1/$\sqrt{32}$&2.3948e-02	&  &	 5.1320e-01	 &\\
		1/$\sqrt{64}$&1.1851e-02	& 2.0299	& 3.6358e-01&	 0.9945\\
			1/$\sqrt{128}$&5.8750e-03	& 2.0246	& 2.4817e-01&	 1.1019\\
	1/$\sqrt{256}$	&2.9375e-03	 &2.0000	 &1.7567e-01&	 0.9968\\
	1/$\sqrt{512}$	&1.4633e-03	& 2.0108	 &1.2421e-01&	 1.0002\\
		\bottomrule
	\end{tabular}
	\label{table1}
\end{table}
\begin{table}
	\centering
	\caption{Convergence order in spatial direction for Example \ref{example1} on Voronoi meshes with \(k=2\).}
	\begin{tabular}{ccccc}
		\toprule 
		$h$ & $L^2$-error & $\text{Order}$ & $H^1$-error & $\text{Order}$\\
		\midrule	
		1/$\sqrt{32}$&1.4065e-03&	 	& 6.0044e-02&	 \\
	1/$\sqrt{64}$&	5.0690e-04&	 2.9446	 &2.9608e-02&	 2.0401\\
		1/$\sqrt{128}$&	1.7415e-04&	 3.0828	 &1.4638e-02&	 2.0325\\
	1/$\sqrt{256}$&	6.1541e-05&	 3.0014	 &7.2992e-03&	 2.0079\\
		1/$\sqrt{512}$&	2.1508e-05&	 3.0334	 &3.6139e-03&	 2.0283\\
		\bottomrule
	\end{tabular}
	\label{table2}
\end{table}

\begin{table}
	\centering
	\caption{Convergence order in spatial direction for Example \ref{example1} on non-convex meshes with \(k=1\).}
	\begin{tabular}{ccccc}
		\toprule 
		$h$ & $L^2$-error & $\text{Order}$ & $H^1$-error & $\text{Order}$\\
		\midrule
1/10&9.0974e-03&	 	& 3.2625e-01&	 \\
1/15&4.0876e-03&	 1.9731	 &2.1722e-01&	 1.0032\\
1/20&2.3084e-03&	 1.9863	 &1.6271e-01&	 1.0042\\
1/25&1.4799e-03&	 1.9923	 &1.3005e-01&	 1.0041\\
1/30&1.0286e-03&	 1.9953	 &1.0830e-01&	 1.0038\\
		\bottomrule
	\end{tabular}
	\label{table3}
\end{table}

\begin{table}
	\centering
	\caption{Convergence order in spatial direction for Example \ref{example1} on non-convex meshes with \(k=2\).}
	\begin{tabular}{ccccc}
		\toprule 
		$h$ & $L^2$-error & $\text{Order}$ & $H^1$-error & $\text{Order}$\\
		\midrule
			1/10&3.3693e-04&	 	& 2.6161e-02&	 \\
			1/15&9.8591e-05&	 3.0308	 &1.1612e-02&	 2.0032\\
			1/20&4.1231e-05&	 3.0304	 &6.5277e-03&	 2.0022\\
			1/25&2.0997e-05&	 3.0242	 &4.1762e-03&	 2.0017\\
			1/30&1.2115e-05&	 3.0161	 &2.8993e-03&	 2.0016\\
		\bottomrule
	\end{tabular}
	\label{table4}
\end{table}

\begin{table}
	\centering
	\caption{Convergence order in temporal direction for Example \ref{example1} on rectangle meshes with \(k=2\).}
	\begin{tabular}{ccccc}
		\toprule 
		$h$ & $L^2$-error & $\text{Order}$ & $H^1$-error & $\text{Order}$\\
		\midrule
			1/10&1.0607e-04&	 &	 7.9596e-03&	 \\
			1/15&3.0096e-05&	 3.1069&	 3.5288e-03&	 2.0061\\
			1/20&1.2535e-05&	 3.0445&	 1.9864e-03&	 1.9975\\
			1/25&6.3744e-06&	 3.0305&	 1.2717e-03&	 1.9985\\
			1/30&3.6749e-06&	 3.0209&	 8.8329e-04&	 1.9990\\
		\bottomrule
	\end{tabular}
	\label{table5}
\end{table}
\begin{example}\label{example2}
In our second example, let $\gamma=10^{-4}$ and the initial function is take as 
\begin{align}
u_0(x,y,0) = 0.2\big(\sin(2x)\sin(3y) + \sin(5x)\sin(5y)\big),\quad (x,y) \in [0,2\pi]\times[0,2\pi].\notag 
\end{align}
Obviously, the exact solution for this example is unknown.

This example aims to investigate the energy dissipation property and the evolution of numerical solution for the proposed numerical scheme. Figure \ref{figure2} illustrates the temporal evolution of the numerical solution at different time instants. It can be observed that the numerical solutions change drastically over an extremely short initial period and then evolve slowly thereafter, which is consistent with the numerical results in the existing literature \cite{JiangSunTang2026}. Furthermore, Figure \ref{figure3} depicts the energy decay property of the numerical solution, which is consistent with the preceding theoretical analysis. 

\begin{figure}[!htbp]
	\centering
	\subfigure[Time $t=0.0$.]{\begin{minipage}[t]{0.48\linewidth}
			\includegraphics[width=1\linewidth]{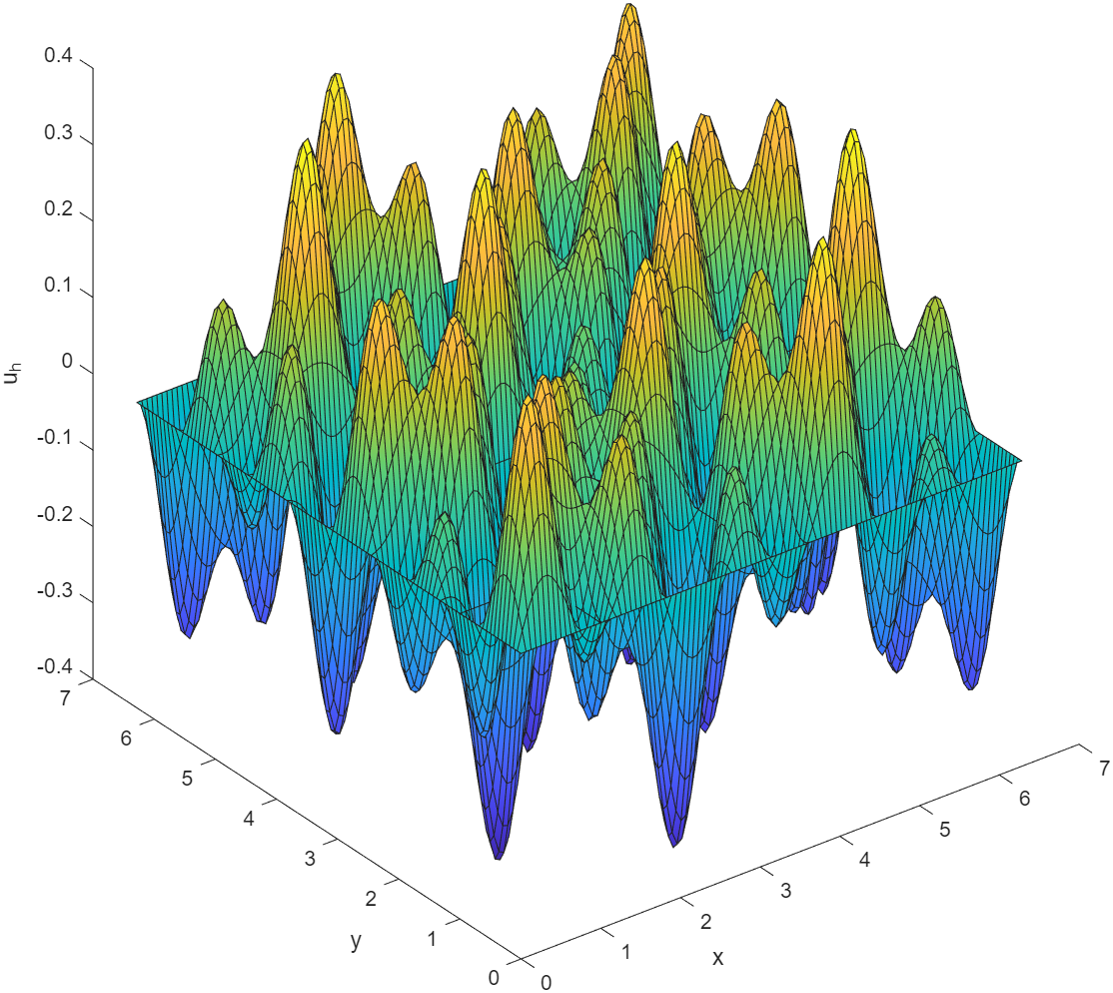}
		\end{minipage}
	}
	\subfigure[Time $t=0.1$. ]{\begin{minipage}[t]{0.48\linewidth}
			\includegraphics[width=1\linewidth]{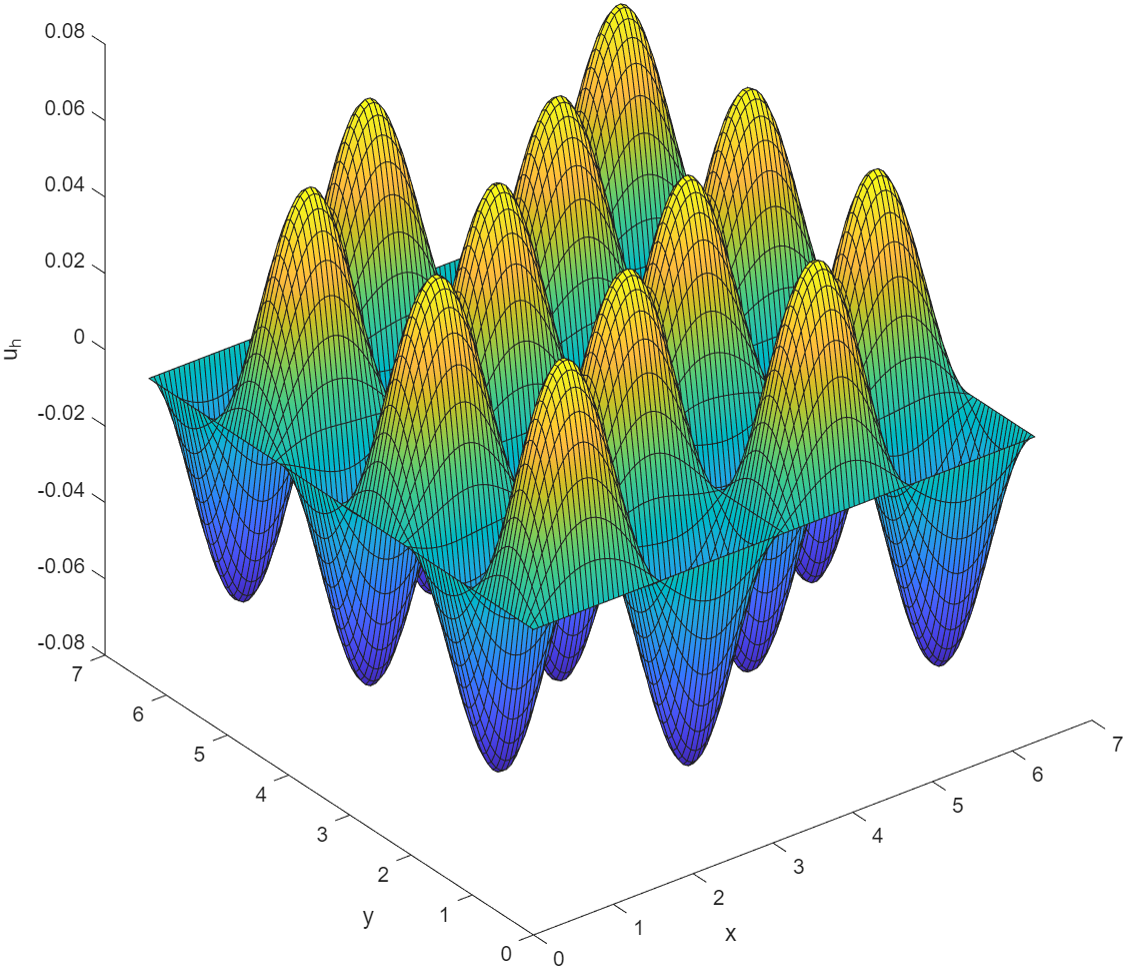}
		\end{minipage}
	}
		\subfigure[Time $t=0.5$. ]{\begin{minipage}[t]{0.48\linewidth}
			\includegraphics[width=1\linewidth]{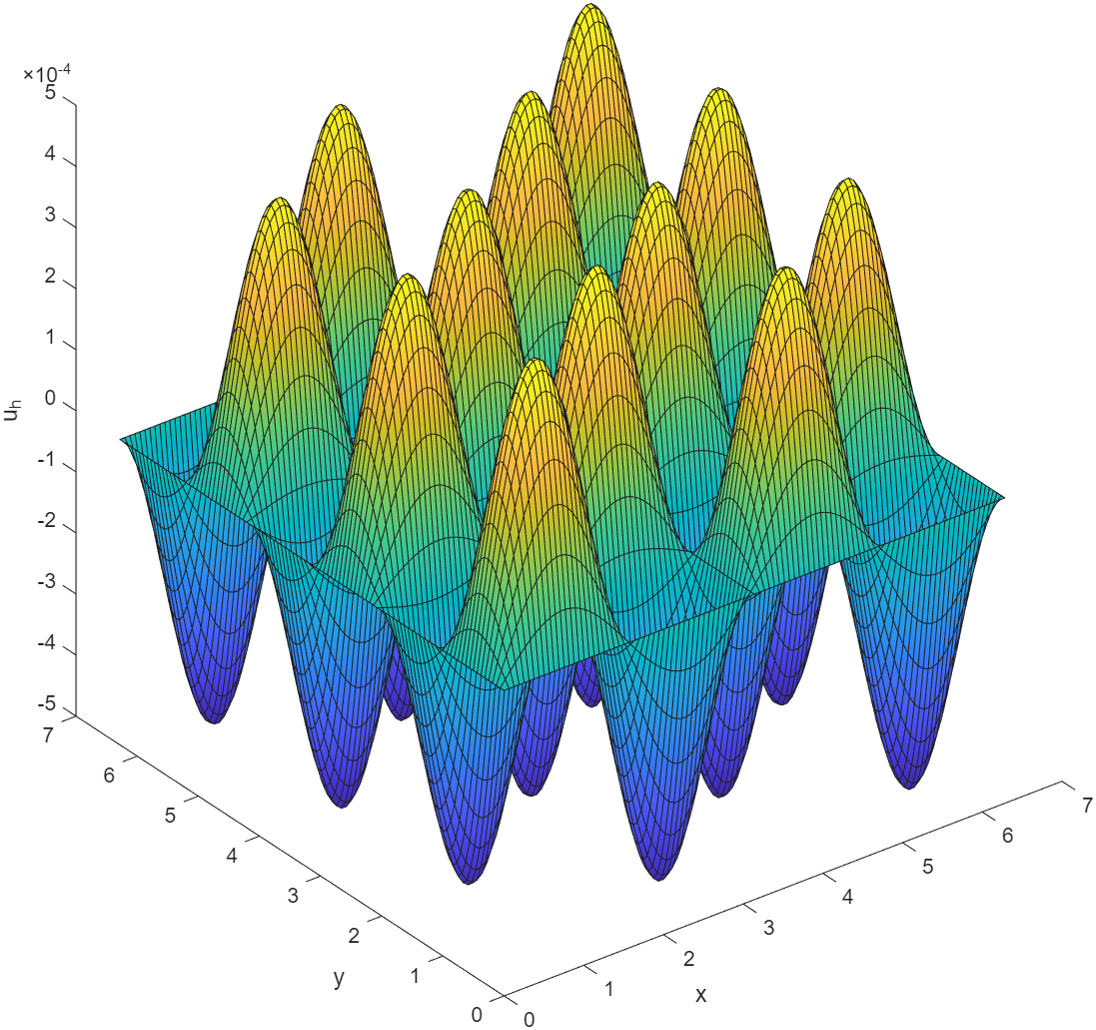}
		\end{minipage}
	}
		\subfigure[Time $t=0.8$. ]{\begin{minipage}[t]{0.48\linewidth}
			\includegraphics[width=1\linewidth]{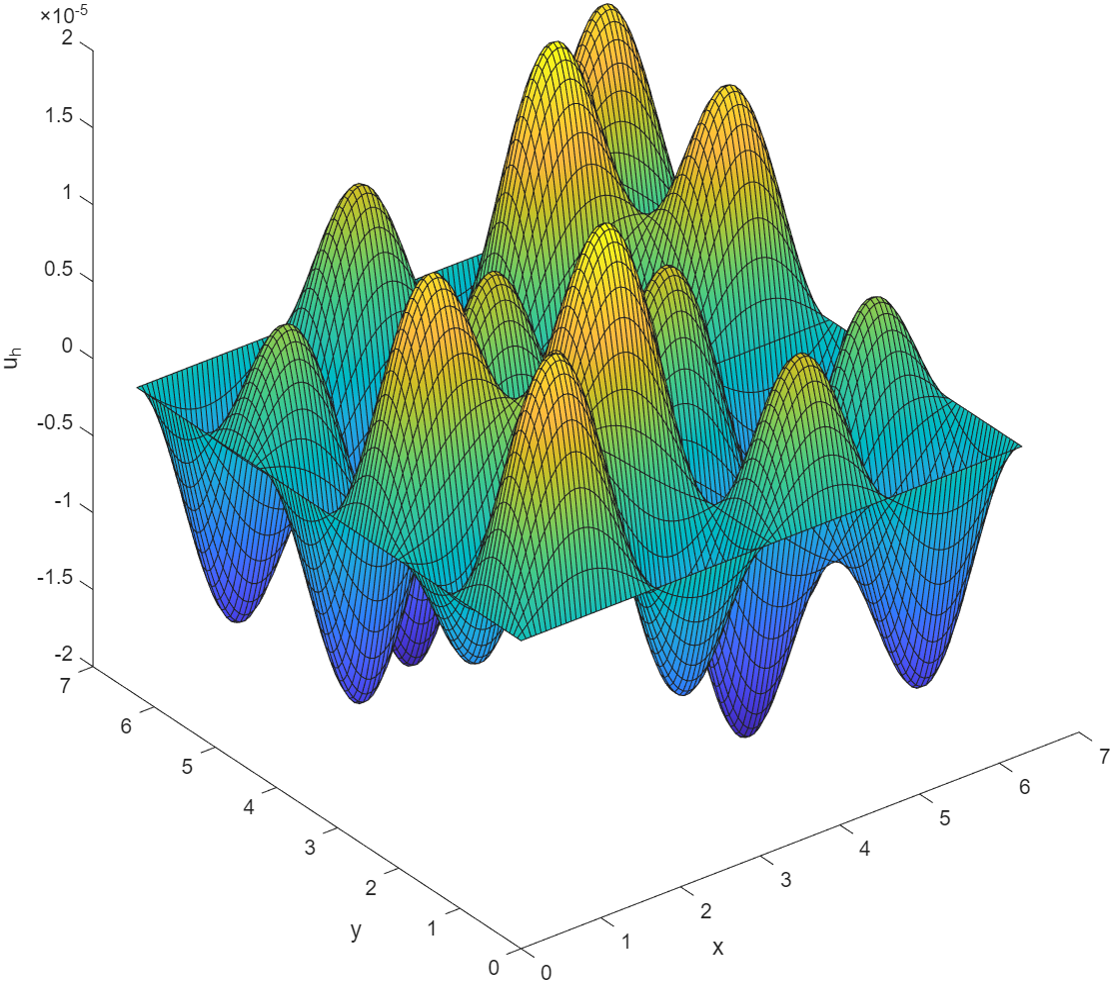}
		\end{minipage}
	}
		\subfigure[Time $t=1.0$. ]{\begin{minipage}[t]{0.48\linewidth}
			\includegraphics[width=1\linewidth]{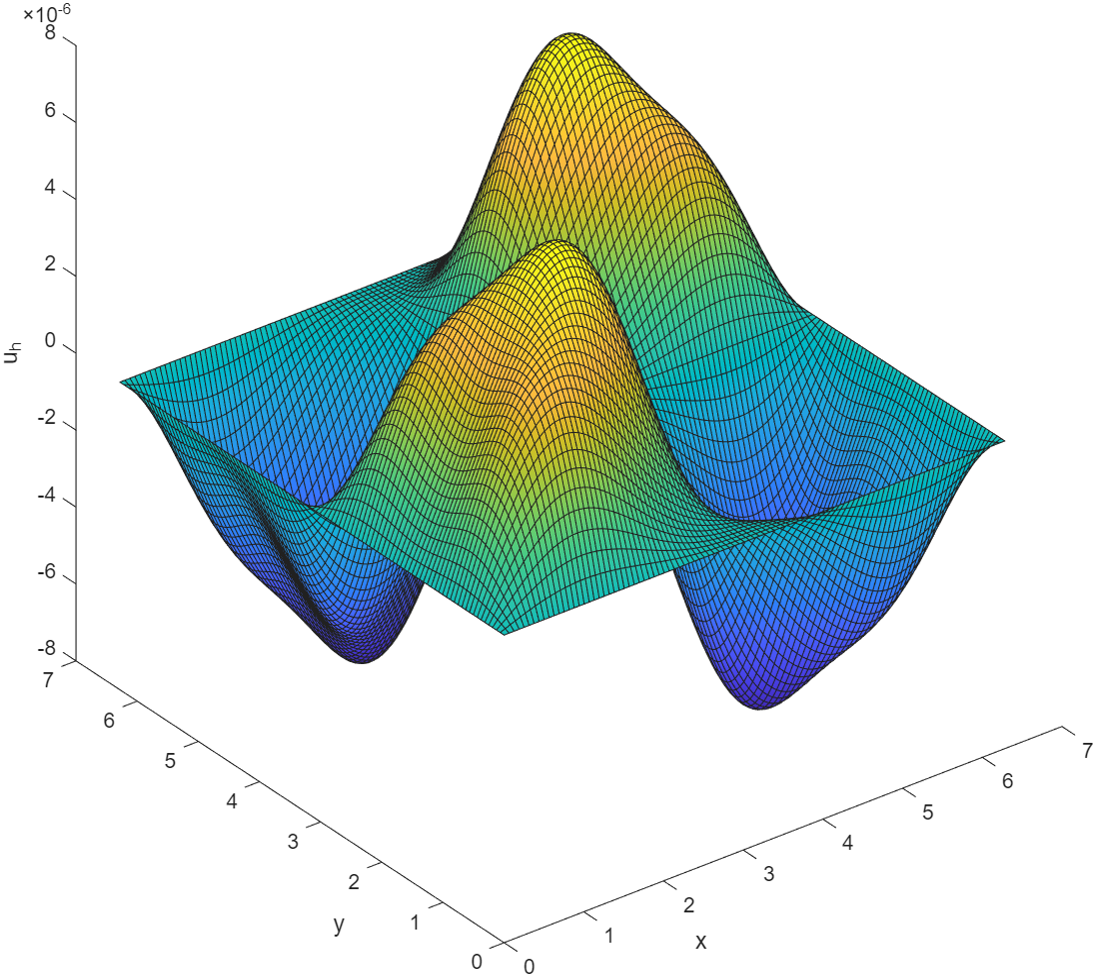}
		\end{minipage}
	}
		\subfigure[Time $t=2.0$. ]{\begin{minipage}[t]{0.48\linewidth}
			\includegraphics[width=1\linewidth]{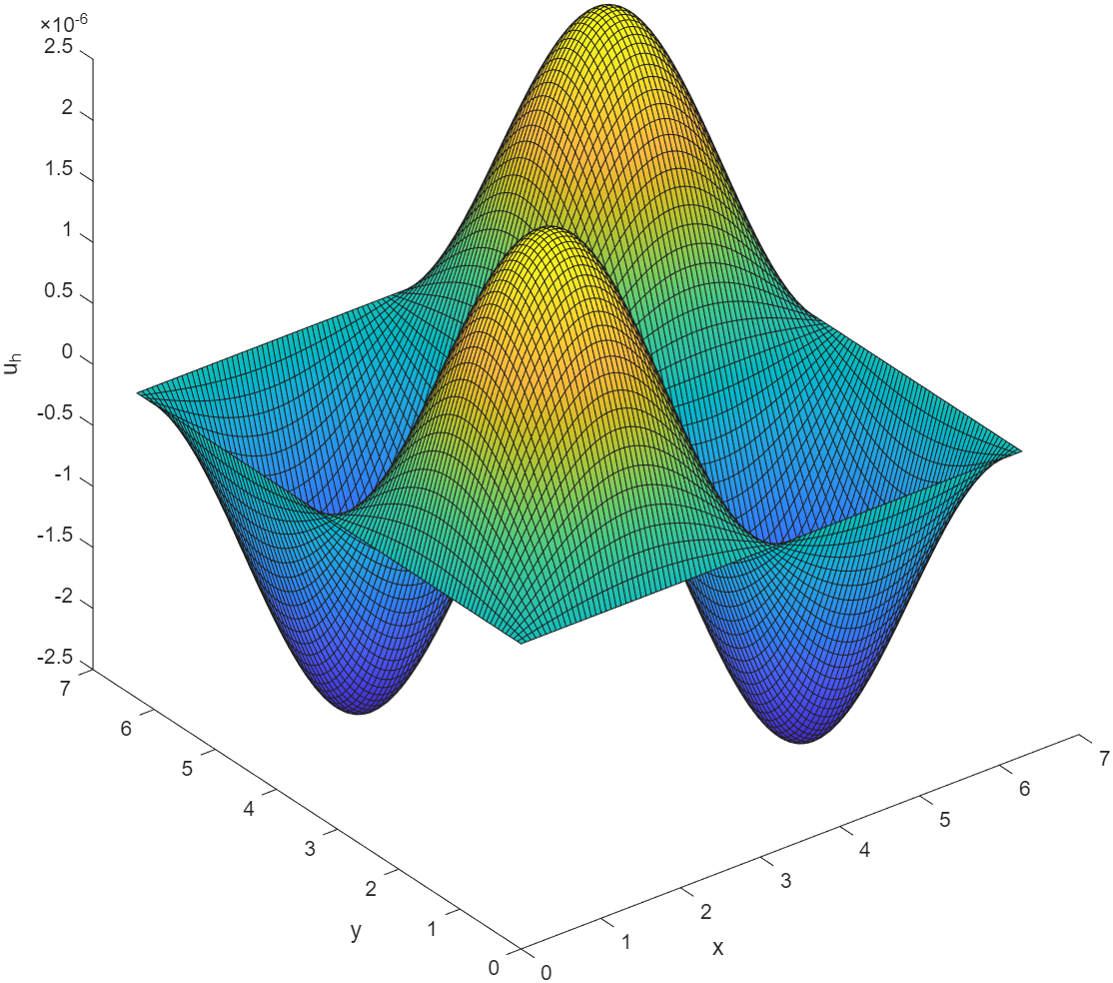}
		\end{minipage}
	}
	\caption{Evolution of numerical solution for Example \ref{example2} at distinct time levels.}
	\label{figure2}
\end{figure}
\begin{figure}[!htbp]
	\centering
	\includegraphics[width=7cm]{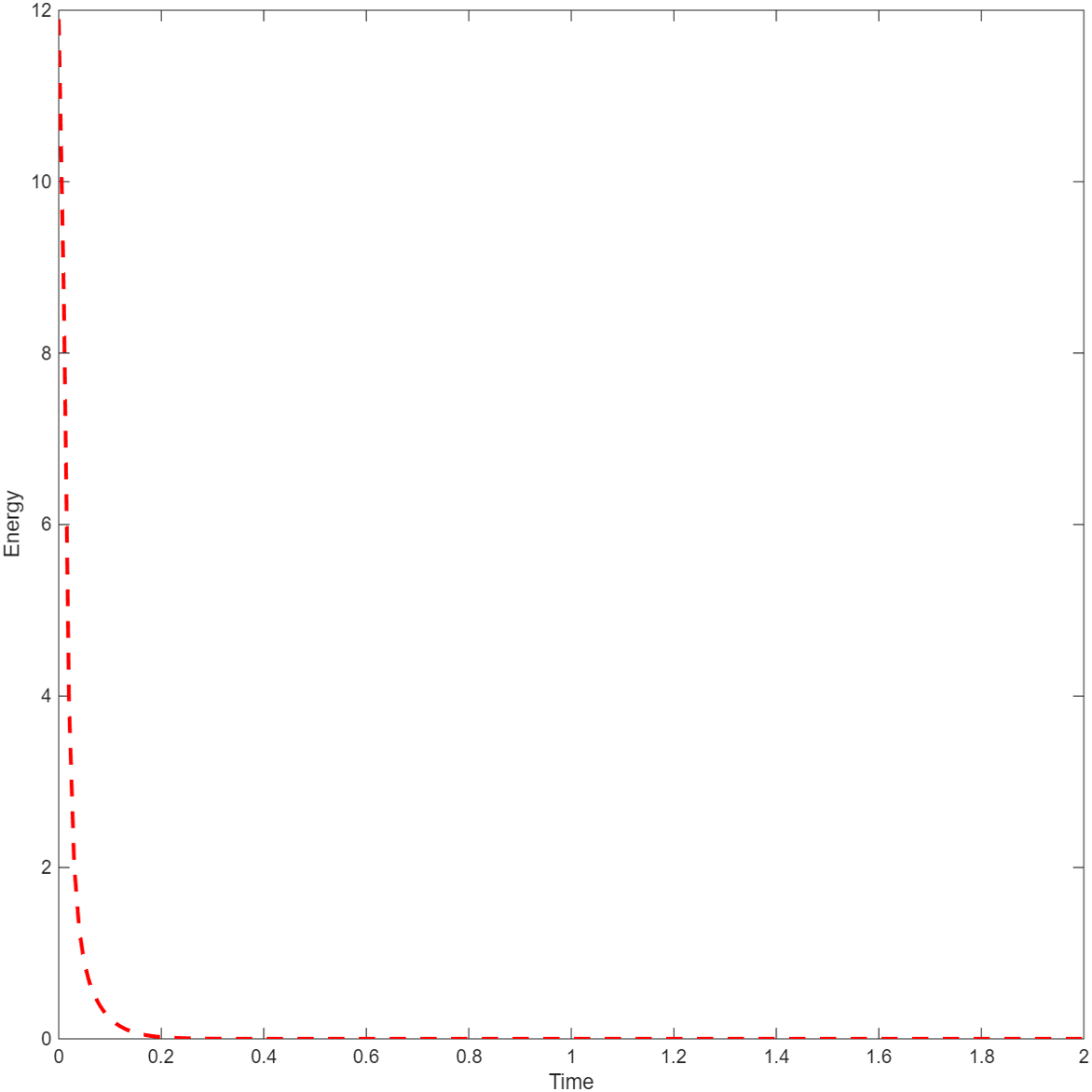}
	\caption{Dissipative property of discrete energy over time.}\label{figure3}
\end{figure}
\end{example}
\section{Conclusions}\label{section5}
This paper constructs a fully discrete mixed virtual element algorithm for the two-dimensional extended Fisher–Kolmogorov (EFK) equation by adopting the leapfrog temporal discretization scheme. It can preserve the energy dissipation property of the original equation. Through elaborate treatment of the nonlinear term and the inverse inequality technique, we strictly prove the unconditional optimal convergence of the fully discrete numerical scheme. On the basis of this work, several research directions deserve further investigation. First, we only derive \(L^2\)-norm convergence estimates for the primary variables, without providing corresponding error bounds in the \(H^1\)- norm. Besides, we do not establish error estimates for the intermediate variable $v$. All these points merit further study in future work. Second, higher-order temporal discretization schemes can be considered in future work, such as Runge–Kutta methods and time-discontinuous discretization techniques.

\section*{Declaration of Interest Statement}
The authors declare no competing financial interests or personal affiliations that might affect the findings presented in this paper.

\section*{Credit Author Statement}
{\bf{Zhen Guan:}} Methodology, Software, Validation, Formal analysis, Writing-Original Draft, Project administration, Funding acquisition; {\bf{Xianxian Cao:}} Conceptualization, Methodology, Validation, Formal analysis, Resources, Writing-Review \& Editing, Project administration, Funding acquisition; {\bf{Houchao Zhang:}} Methodology; {\bf{Junjun Wang:}} Methodology.

\section*{Data availability}
Data will be made available on request.

\section*{Acknowledgments}
This work is supported by the Doctoral Starting Foundation of Pingdingshan University (No. PXY-BSQD2023022) and the Natural Science Foundation of Henan Province (Nos. 242300420655, 262300420348).

\end{document}